\title{Polynomial stochastic games \\ via sum of squares optimization}
\author{Parikshit Shah and Pablo A. Parrilo  \\
Department of Electrical Engineering and Computer Science \\
Massachusetts Institute of Technology, Cambridge, MA 02139
\thanks{This research was funded in part by AFOSR MURI subawards 2003-07688-1 and 102-1080673.} }
\def\T={\buildrel {\scriptscriptstyle\triangle} \over =}
\def\Ex{{\mathbf{E}}}
\def\p{{^{\prime}}}
\def\mc{\mathcal}
\def\mb{\mathbf}
\def\mh{{\mathcal{H}(\bar{\nu})(s)}}
\def\t{{^{T}}}
\def\xib{{\bar{\xi}}}
\def\mxi{{\mathcal{H}(\xib(s))}}
\def \val{\mathrm{val}}
\newtheorem{theorem}{Theorem}
\newtheorem{definition}{\indent Definition}
\newtheorem{lemma}{Lemma}
\newenvironment{proof}[1][Proof]{\begin{trivlist}
\item[\hskip \labelsep {\bfseries #1}]}{\end{trivlist}}
\newenvironment{remark}[1][Remark]{\begin{trivlist}
\item[\hskip \labelsep {\bfseries #1}]}{\end{trivlist}}
\begin{document}

\pagenumbering{arabic}
 \maketitle 

\begin{abstract}
Stochastic games are an important class of problems that generalize
Markov decision processes to game theoretic scenarios. We consider
finite state two-player zero-sum stochastic games over an infinite
time horizon with discounted rewards. The players are assumed to have
infinite strategy spaces and the payoffs are assumed to be
polynomials. In this paper we restrict our attention to a special
class of games for which the \emph{single-controller assumption}
holds. It is shown that minimax equilibria and optimal strategies for
such games may be obtained via semidefinite programming.
\end{abstract}


\section{Introduction}

Markov decision processes (MDPs) are very widely used system modeling
tools where a single agent attempts to make optimal decisions at each
stage of a multi-stage process so as to optimize some reward or payoff
\cite{Bertsekas}. Game theory is a system modeling paradigm that
allows one to model problems where several (possibly adversarial)
decision makers make individual decisions to optimize their own payoff
\cite{Fudenberg}. In this paper we study \emph{stochastic games}
\cite{Koos}, a framework that combines the modeling power of MDPs and
games. Stochastic games may be viewed as \emph{competitive MDPs} where
several decision makers make decisions at each stage to maximize their
own reward. Each state of a stochastic game is a simple game, but the
decisions made by the players affect not only their current payoff,
but also the transition to the next state. 

Notions of solutions in games have been extensively studied, and are
very well understood. The most popular notion of a solution in game
theory is that of a \emph{Nash equilibrium}. While these equilibria
are hard to compute in general, in certain cases they may be computed
efficiently. For games involving two players and finite action spaces,
mixed strategy minimax equilibria always exist (see, e.g.,
\cite{Fudenberg}). These minimax saddle points correspond to the
well-known notion of a Nash equilibrium. From a computational
standpoint such games are considered tractable because Nash equilibria
may be computed efficiently via linear programming. Stochastic games
were introduced by Shapley \cite{Shapley} in 1953. In his paper, he
showed that the notion of a minimax equilibrium may be extended to
stochastic games with finite state spaces and strategy sets. He also
proposed a value iteration-like algorithm to compute the equilibria.
In 1981 Parthasarathy and Raghavan \cite{Partha,Koos} studied single
controller games. Single controller games are games where the
probabilities of transitions are controlled by the action of only one
player. They showed that stochastic games satisfying this property
could be solved efficiently via linear programming (thus proving that
such problems with rational data could be computed in a finite number
of steps).

While computational techniques for finite games are reasonably well
understood, there has been some recent interest in the class of
\emph{infinite games}; see \cite{Par,Stein} and the references
therein. In this important class, players have access to an infinite
number of pure strategies, and the players are allowed to randomize
over these choices. In a recent paper \cite{Par}, Parrilo describes a
technique to solve two-player, zero-sum infinite games with polynomial
payoffs via semidefinite programming. It is natural to wonder whether
the techniques from finite stochastic games can be extended to
infinite stochastic games (i.e. finite state stochastic games where
players have access to infinitely many pure strategies). In
particular, since finite, single-controller, zero-sum games can be
solved via linear programming, can similar infinite stochastic games
be solved via semidefinite programming? The answer is affirmative, and
this paper focuses on establishing this result.

The main contribution of this paper is to provide a computationally
efficient, finite dimensional characterization of the solution of
single-controller polynomial stochastic games. For this, we extend the
linear programming formulation that solves the finite action
single-controller stochastic game (i.e., under assumption (SC) below),
to an infinite dimensional optimization problem when the actions are
uncountably infinite. We furthermore establish the following
properties of this infinite dimensional optimization problem:
\begin{enumerate}
\item Its optimal solutions correspond to minimax equilibria.
\item The problem can be solved efficiently by semidefinite programming.
\end{enumerate}
Section~\ref{sec:main} of this paper provides a formal description of
the problem and introduces the basic notation used in the paper. We
show that for two-player zero-sum polynomial stochastic games,
equilibria exist and that the corresponding equilibrium value vector
is unique. (This proof is essentially an adaptation of the original
proof by Shapley in \cite{Shapley} for finite stochastic games). In
Section~\ref{sec:main} we also briefly review some elegant results
about polynomial nonnegativity, moment sequences of nonnegative
measures, and their connection to semidefinite programming. In
Section~\ref{sec:finite}, we briefly review the linear programming
approach to finite stochastic games. Section~\ref{sec:infinite} states
and proves the main result of this paper. In Section~\ref{sec:example}
we present an example of a two-player, two-state stochastic game, and
compute the equilibria via semidefinite programming. Finally, in
Section~\ref{sec:conclusions} we state some natural extensions of this
problem, conclusions, and directions of future research.

\section{Problem description}
\label{sec:main}

\subsection{Stochastic games}
We consider the problem of solving two-player zero-sum stochastic
games via mathematical programming. The game consists of finitely
many states with two adversarial players that make simultaneous
decisions. Each player receives a payoff that depends on the actions
of both players and the state (i.e. each state can be thought of as
a particular zero-sum game). The transitions between the states are
random (as in a finite state Markov decision process), and the
transition probabilities in general depend on the actions of the
players and the current state. The process runs over an infinite
horizon. Player $1$ attempts to maximize his reward over the horizon
(via a discounted accumulation of the rewards at each stage) while
player $2$ tries to minimize his payoff to player $1$. If
$(a_{1}^{1},a_{1}^{2},\ldots)$ and $(a_{2}^{1},a_{2}^{2},\ldots)$
are sequences of actions chosen by players $1$ and $2$ resulting in
a sequence of states $(s_{1},s_{2},\ldots)$ respectively, then the
reward of player $1$ is given by:
$$
\sum_{k=1}^{\infty}\beta^{k}r(s_{k},a_{1}^{k},a_{2}^{k}).
$$The game is completely
defined via the specification of the following data:
\begin{enumerate}
\item The (finite) state space $\mathcal{S}=\{1,\ldots,S\}$.
\item The sets of actions for players $1$ and $2$ given by
$A_{1}$ and $A_{2}$.
\item The payoff function, denoted by $r(s,a_{1},a_{2})$, for a given set of state $s$ and actions $a_{1}$ and $a_{2}$
(of players $1$ and $2$).
\item The probability transition matrix $p(s^{\prime};s,a_{1},a_{2})$ which provides the conditional probability of
transition from state $s$ to $s^{\prime}$ given players' actions.
\item The discount factor $\beta$, where $ 0 \leq \beta < 1$.
\end{enumerate}

To fix ideas, consider the following example of a two-state
stochastic game (i.e. $\mc{S}=\left\{ 1,2 \right \}$). The action
spaces of the two players are $A_{1}=A_{2}=[0,1]$. The payoff
function in state $1$ is $r(1,a_1,a_2)=r_1(a_1,a_2)$ and the payoff
function in state $2$ is given by $r(2,a_1, a_2)=r_2(a_1,a_2)$. Both
are assumed to be polynomials in $a_1$ and $a_2$. The probability
transition matrix is:
$$
P=\left[ \begin{array}{cc} p_{11}(a_1,a_2) & p_{12}(a_1,a_2) \\
p_{21}(a_1,a_2) & p_{22}(a_1, a_2)
\end{array}
\right].
$$ Every entry in this matrix is assumed to be a polynomial in $a_1$
and $a_2$. This stochastic game can be depicted graphically as shown
in Fig.~\ref{fig:game}. We will return to a specific instance of this
example in Section~\ref{sec:example}, where we explicitly solve for
the equilibrium strategies of the two players.
\begin{figure}
\begin{center}
\includegraphics{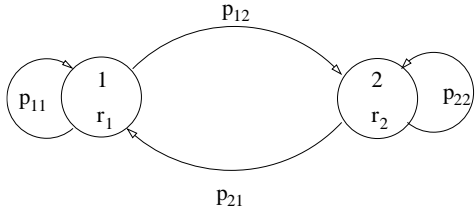}
\end{center}
\caption{A two state stochastic game. The payoff functions
associated to the states are denoted by $r_1$ and $r_2$. The edges
are marked by the corresponding state transition probabilities. }
\label{fig:game}
\end{figure}

Through most of this paper (except Section~\ref{sec:existence}) we make the
following important assumption
about the probability transition matrix: \\ \\
{\bf Assumption SC} \\
The probability transition to state $s\p$ conditioned upon the
current state being $s$ depends only on $s$, $s \p$, and the action
$a_{1}$ of player $1$ for every $s$ and $s\p$. This probability is
\emph{independent of the action of player $2$}. Thus,
$p(s\p;s,a_{1},a_{2})=p(s\p;s,a_{1})$. This is known as the
\emph{single-controller assumption}.\\ 

In this paper we will mostly (except briefly, in
Section~\ref{sec:finite} where finite strategy spaces are considered)
be concerned with the case where the action spaces $A_{1}$ and $A_{2}$
of the two players are uncountably infinite sets. For the sake of
simplicity we will often consider the case where
$A_{1}=A_{2}=[0,1]\in\mathbb{R}$. The results easily generalize to the
case where the strategy sets are finite unions of arbitrary intervals
of the real line. For the sake of simplicity, we also assume that the
action sets are the same for each state, though this assumption may be
relaxed. We will denote by $a_{1}$ and $a_{2}$, the actual actions
chosen by players $1$ and $2$ from their respective action spaces. The
payoff function is assumed to be a polynomial in the variables $a_{1}$
and $a_{2}$ with real coefficients: $$ r(s,a_{1},
a_{2})=\sum_{i=1}^{d_{1}}\sum_{j=1}^{d_{2}}r_{ij}(s)a_{1}^{i}a_{2}^{j}.
$$
Finally, we assume that the transition
probability $p(s\p;s,a_{1})$ is a polynomial in the action $a_{1}$.

The decision process runs over an infinite horizon,  thus it
is natural to restrict one's attention to stationary strategies for
each player, i.e. strategies that depend only on the state of the
process and not on time. Moreover, since the process involves two
adversarial decision makers, it is also natural to look for
randomized strategies (or mixed strategies) rather than pure
strategies so as to recover the notion of a minimax equilibrium. A
\emph{mixed} strategy for player $1$ is a finite set of probability
measures $\mathbf{\mu}=\left[ \mu(1), \ldots, \mu(S) \right]$
supported on the action set $A_{1}$. Each probability measure
corresponds to a randomized strategy for player $1$ in some
particular state, for example $\mu(k)$ corresponds to the randomized
strategy that player $1$ would use when in state $k$. Similarly,
player $2$'s strategy will be represented by $\mathbf{\nu}=\left[
\nu(1), \ldots, \nu(S) \right]$. (A word on notation: Throughout the
paper, indices in parentheses will be used to denote the state. Bold
letters will be used indicate vectorization with respect to the
state, i.e., collection of objects corresponding to different states
into a vector with the $i^{th}$ entry corresponding to state $i$.
The Greek letters $\xi$, $\mu$, $\nu$ will be used to denote
measures. Subscripts on these Greek letters will be used to denote
moments of the measures. A bar over a greek letter indicates a (finite) moment sequence (the length of the sequence being clear from the context). For example $\xi_{j}(i)$ denotes the
$j^{th}$ moment
of the measure $\xi$ corresponding to state $i$, and $\bar{\xi}(i)=[\xi_{0}(i), \ldots, \xi_{n}(i)]$).

A strategy $\mathbf{\mu}$ leads to a probability transition matrix
$P(\mathbf{\mu})$ such that
$P_{ij}(\mathbf{\mu})=\int_{A_{1}}p(j;i,a_{1})d\mu(i)$. Thus, once
player $1$ fixes a strategy $\mathbf{\mu}$, the probability
transition matrix is fixed, and can be obtained by integrating each
entry in the matrix with respect to the measure $\mathbf{\mu}$.
(Since the entries are polynomials, upon integration, these entries
depend affinely on the moments $\mu(i)$). Given strategies
$\mathbf{\mu}$ and $\mathbf{\nu}$, the expected reward collected by
player $1$ in some stage $s$ is given by:
$$r(s,\mu(s),\nu(s))= \int_{A_{1}} \int_{A_{2}} r(s,a_{1},a_{2})d\mu(s)d\nu(s).$$ The reward collected
over the infinite horizon (for fixed strategies $\mu(s)$ and
$\nu(s)$) starting at state $s$, $v_{\beta}(s,\mu(s),\nu(s))$, is
given by the system of equations:
\begin{equation*}
\begin{array}{l}
v_{\beta}(s,\mu(s),\nu(s))=r(s,\mu(s),\nu(s))+
\\ \text{\indent \indent }\beta\sum_{s \p \in \mc{S}} \left(
\int_{A_{1}}p(s^{\prime};s,a_{1})d\mu(s)
\right)v_{\beta}(s^{\prime},\mu(s \p), \nu(s \p)) \text{\indent}
\forall s.
\end{array}
\end{equation*}
Vectorizing $v_{\beta}(s,\mu(s),\nu(s))$, we obtain
$$\mb{v}_{\beta}(\mu,\nu)=(I-\beta P(\mathbf{\mu}))^{-1}\mb{r}(\mathbf{\mu},\mathbf{\nu}),$$
where $\mb{r}(\mathbf{\mu},\mathbf{\nu})=\left[r(1,\mu(1),\nu(1)),
\ldots, r(S,\mu(S),\nu(S)) \right]\in \mathbb{R}^{S}.$

\subsection{Solution Concept}
We now briefly discuss the question: ``What is a reasonable solution
concept for stochastic games?'' Recall that for zero-sum normal form
games, a Nash equilibrium is a widely used notion of equilibrium in
competitive scenarios. A Nash equilibrium in a two-player game is a
pair of independent randomized strategies (say $\mu$ and $\nu$, one
for each player) such that, given player $2$ plays the $\nu$, player
$1$'s best response would be to play $\mu$ and vice-versa. It is an
easy exercise that computation of Nash equilibria is equivalent to
finding saddle points of the payoff-function. It is also well-known
that Nash equilibria (or equivalently saddle points) correspond to the
minimax notion of an equilibrium, i.e. points that satisfy the
following equality:
$$
\min_{\mu} \max_{\nu} v(\mu, \nu) = \max_{\nu} \min_{\mu} v(\mu,
\nu).
$$ While there may exist no pure strategies that satisfy this
equality, it may be achieved by allowing randomization over the
allowable strategies. \\ \indent In his seminal paper \cite{Shapley},
Shapley generalized the notion of Nash equilibria to stochastic
games. He defined the notion of a ``stationary equilibrium'' to be a
pair of randomized strategies (over the action space) that depended
only on the state of the game. (Of course, to be an equilibrium, these
mixed strategies must also satisfy the no-deviation principle). For
stochastic games, once one restricts attention to stationary
equilibria, instead of having unique ``values'' (as in normal form
games), one has a unique ``value vector''. This vector is indexed by
the state and the $i^{th}$ component is interpreted as the equilibrium
value Player $1$ can expect to receive (over the infinite discounted
process) conditioned on the fact that the game starts in state
$i$. Note that different states of the game may be favorable to
different players. Since the actions affect both payoffs and state
transitions, players must balance their strategies so that they
receive good payoffs in a particular state along with favorable state
transitions. The ``no unilateral deviation" principle, saddle point
inequality (interpreted row-wise, i.e., conditioned upon a particular
state) and the equivalence of the minmax and maxmin over randomized
strategies all extend to the stochastic game case, and when we
restrict attention to games with just one state, we recover the
classical notions of equilibrium.

\begin{definition}
A pair of vector of mixed strategies (indexed by the state)
$\mathbf{\mu}^{0}$ and $\mathbf{\nu}^{0}$ which satisfy the saddle
point property:
\begin{equation}
\mathbf{v}_{\beta}(\mathbf{\mu},\mathbf{\nu}^{0}) \leq
\mathbf{v}_{\beta}(\mathbf{\mu}^{0},\mathbf{\nu}^{0}) \leq
\mathbf{v}_{\beta}(\mathbf{\mu}^{0},\mathbf{\nu})
\label{eq:saddlepoint}
\end{equation}
for all (vectors of) mixed strategies  $\mathbf{\mu}, \mathbf{\nu}$
are called \emph{equilibrium strategies}. The corresponding vector
$\mathbf{v}_{\beta}(\mathbf{\mu}^{0},\mathbf{\nu}^{0})$ is called
the \emph{value vector} of the game.
\end{definition}

One should note that $\mathbf{v}_{\beta}(\mu, \nu)$ is a vector in
$\mathbb{R}^{S}$ indexed by the initial state of the Markov process.
Hence the above inequality is a vector inequality and is to be
interpreted componentwise. More precisely, if $\mc{A}$ is the action
space, let $\Delta(\mc{A})$ denote the space of probability measures
supported on $\mc{A}$. Then the function $v_{\beta}$ is a function
of the form:
$$
v_{\beta}: \Pi_{i=1}^{S}\Delta(\mc{A}) \times \Pi_{i=1}^{S}\Delta(\mc{A}) \rightarrow \mathbb{R}^{S},
$$
and equilibrium strategies correspond to the saddle-points of this
function. The mixed strategies of the players are indexed by the
state (i.e. there is one probability measure per state per player).
These probability measures (conditioned upon the state) are
independent across states, and are also independent across the
players.

\subsection{Existence of Equilibria}
\label{sec:existence}

In his original paper, Shapley \cite{Shapley} showed that stationary
equilibria always exist (and that the corresponding value-vectors are
unique) for two-player, zero-sum, finite state, finite action
stochastic games. (Shapley considered games where at each state there
was some probability of termination, where as in this paper we
consider games over an infinite horizon with discounted rewards, as
already mentioned. These two formulations are equivalent in the sense
that starting from a discounted game one can construct a game with
termination probabilities and vice-versa such that both have the same
equilibrium value vectors.) In this subsection we address the
existence and uniqueness issue, and prove that for two-player,
zero-sum stochastic games over finite state spaces, infinite strategy
spaces, and polynomial payoffs, stationary equilibria always exist,
and that the value vectors are unique.  Throughout the paper, we
assume that the transition probabilities are polynomial functions of
the actions of the players. It is important to note that the results
of this subsection \emph{do not depend upon the single-controller
assumption}. As a by-product of this proof, we obtain a simple
algorithm for computing equilibria for all such games. This algorithm
is analogous to \emph{policy-iteration} in dynamic programming, and
consists of solving a sequence of simple (non-stochastic) games whose
value-vectors converge to the true value vector. \\ \indent Let
$p(x,y)$ be a polynomial, and $A=[0,1]$ be the strategy space of
players $1$ and $2$. Let $\val(p(x,y))$ be the value of the zero-sum
polynomial game with the payoff function as $p(x,y)$ and the strategy
space $A$. It can be shown that a mixed-strategy Nash equilibrium
always exists for two-player zero-sum polynomial games \cite{DKS}, and
they can be computed using semidefinite programming \cite{Par}.

\begin{lemma}
\label{lem:bound}
Let $p_{1}(x,y)$ and $p_{2}(x,y)$ be given polynomials. Then
$$
|\val(p_{1}(x,y))-\val (p_{2}(x,y) )| \leq \max_{x,y \in [0,1]}|p_{1}(x,y)-p_{2}(x,y)|.
$$
\end{lemma}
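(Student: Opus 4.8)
The plan is to exploit the variational (minimax) characterization of the value together with the fact that averaging against probability measures cannot amplify a pointwise bound. Write the value of the zero-sum polynomial game with payoff $p$ as
\[
\val(p) = \max_{\mu}\min_{\nu} \int_{A}\int_{A} p(x,y)\,d\mu(x)\,d\nu(y),
\]
where $\mu,\nu$ range over the probability measures on $A=[0,1]$; the existence of the equilibrium (so that $\max$ and $\min$ are attained and coincide with $\min\max$) is exactly the cited result \cite{DKS,Par}, which I would invoke to justify writing $\max/\min$ rather than $\sup/\inf$. For brevity let $M := \max_{x,y\in[0,1]}|p_{1}(x,y)-p_{2}(x,y)|$ denote the right-hand side of the claimed inequality, and abbreviate the bilinear expected payoff by $E_{\mu,\nu}[p] := \int_{A}\int_{A} p\,d\mu\,d\nu$.

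The key step is a pointwise estimate at the level of mixed strategies. For \emph{any} fixed pair $\mu,\nu$, linearity of the integral and the fact that $\mu,\nu$ are probability measures give
\[
\bigl|E_{\mu,\nu}[p_{1}]-E_{\mu,\nu}[p_{2}]\bigr|
= \bigl|E_{\mu,\nu}[p_{1}-p_{2}]\bigr|
\le E_{\mu,\nu}\bigl[\,|p_{1}-p_{2}|\,\bigr]
\le M,
\]
since $|p_{1}-p_{2}|\le M$ everywhere on $[0,1]^{2}$. In particular $E_{\mu,\nu}[p_{1}] \le E_{\mu,\nu}[p_{2}] + M$ for all $\mu,\nu$.

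It then remains to propagate this uniform bound through the $\min$ and $\max$. I would use monotonicity of the infimum and supremum operators: from $E_{\mu,\nu}[p_{1}] \le E_{\mu,\nu}[p_{2}] + M$ holding for every $\nu$, taking $\min_{\nu}$ on both sides yields $\min_{\nu}E_{\mu,\nu}[p_{1}] \le \min_{\nu}E_{\mu,\nu}[p_{2}] + M$ for each fixed $\mu$ (the additive constant $M$ passes through the $\min$); taking $\max_{\mu}$ then gives $\val(p_{1}) \le \val(p_{2}) + M$. Swapping the roles of $p_{1}$ and $p_{2}$ gives the reverse inequality $\val(p_{2}) \le \val(p_{1}) + M$, and combining the two establishes $|\val(p_{1})-\val(p_{2})| \le M$, as desired.

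This argument is essentially a statement that the value map $p \mapsto \val(p)$ is nonexpansive from the sup-norm on payoffs to the reals, and there is no serious analytic obstacle: the only point requiring care is the bookkeeping that a constant shift commutes with $\min_{\nu}$ and $\max_{\mu}$ and that these extremizers exist, both of which are immediate once the existence of equilibria for polynomial games is taken as given.
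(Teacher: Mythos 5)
Your proof is correct, but it takes a recognizably different route from the paper's. The paper fixes optimal strategy pairs $(\mu_1,\nu_1)$ and $(\mu_2,\nu_2)$ for the two games, assumes without loss of generality that $\val(p_1)>\val(p_2)$, and runs a chain of deviation inequalities
$$
\int p_1\,d\mu_1 d\nu_2 \;\ge\; \int p_1\,d\mu_1 d\nu_1 \;\ge\; \int p_2\,d\mu_2 d\nu_2 \;\ge\; \int p_2\,d\mu_1 d\nu_2,
$$
which sandwiches both values between two numbers whose difference is an integral of $p_1-p_2$ against the single product measure $\mu_1\times\nu_2$; the sup-norm bound follows at once. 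You instead prove the uniform estimate $|E_{\mu,\nu}[p_1]-E_{\mu,\nu}[p_2]|\le M$ for \emph{every} strategy pair and push the additive constant $M$ through $\min_\nu$ and then $\max_\mu$ by monotonicity, obtaining $\val(p_1)\le\val(p_2)+M$ and, by symmetry, the reverse. The two arguments share the same foundation (existence of the value for polynomial games on $[0,1]$), and if you unwind your monotonicity step by naming the extremizers you essentially recover the paper's chain, with $\mu_1$ and player $2$'s optimal (or best-response) strategy in the second game playing the roles of the crossed strategies. What your packaging buys: it is symmetric (no WLOG step), it exhibits the value map $p\mapsto\val(p)$ explicitly as nonexpansive with respect to the sup-norm, and it does not genuinely require the extrema to be attained --- the same manipulations go through with $\sup$ and $\inf$ whenever the value exists, a slight gain in generality over the paper's explicit-optimal-measures argument. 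What the paper's version buys is the direct game-theoretic reading: each inequality in its chain is a concrete no-deviation statement, which matches the style of the later stochastic-game arguments in Section III.
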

\begin{proof}
Let $\mu_1, \nu_1$ be the optimal strategies for the polynomial
zero-sum game with payoff $p_1(x,y)$ (so that
$\Ex_{\mu_1,\nu_1}[p_1(x,y)]=\val(p_{1}(x,y))$) and $\mu_2, \nu_2$ be
the optimal strategies for the game with payoff $p_{2}(x.y)$. If
$\val(p_1)=\val(p_2)$ the result is trivial, so without loss of
generality, assume that $\val(p_1)>\val(p_2)$. By the saddle point
property,
$$
\int p_1(x,y) d\mu_1 d\nu_2  \geq \int p_1(x,y) d\mu_1 d\nu_1 \geq \int p_2(x,y) d\mu_2 d\nu_2 \geq \int p_2(x,y) d\mu_1 d\nu_2.
$$ Here the first inequality follows by considering $\nu_2$ to be a
deviation of player $2$ from his optimal strategy (i.e. $\nu_1$) for
the game with payoff $p_1$, the second inequality follows by the
preceding assumption, and the third inequality follows from a
deviation argument for player $1$ from his optimal strategy. Hence,
$$
\begin{array}{cl}
\left|  \int p_1(x,y) d\mu_1 d\nu_1 - \int p_2(x,y) d\mu_2 d\nu_2 \right| & \leq \left|  \int (p_1(x,y) -p_2(x,y)) d\mu_1 d\nu_2 \right| \\
& \leq  \max_{x,y \in [0,1]} \left|   (p_1(x,y) -p_2(x,y))  \right|\int d\mu_1 d\nu_2.
\end{array}
$$
\end{proof}
Note that the quantity on the right is bounded because we are
considering the maximum of a bounded continuous function on a
compact set. Let $\alpha \in \mathbb{R}^{S}$. Given a polynomial
game with payoff functions $r(s,a_{1}, a_{2})$ and transition
probabilities $p(t;s,a_{1}, a_{2})$ (sometimes we will hide the
state indices and write the entire matrix as $P(a_1, a_2)$), fix a state $s$ and define the polynomial
$G^{s}(\alpha)=r(s,a_{1},a_{2})+\beta\sum_{t\in \mc{S}}p(t;s,a_{1},
a_{2})\alpha_t$. We will need to perform iterations using this vector $\alpha \in \mathbb{R}^{S}$.  We call the iterates of these vectors $\alpha^{k} \in \mathbb{R}^{S}$ ($k$ is the iteration index), and denote $s^{th}$ component of this vector by $\alpha^{k}_{s}$. Pick the vector $\alpha^{0} \in \mathbb{R}^{S}$ arbitrarily and define the
recursion for the $s^{th}$ component at iteration $k$ by:
$$
\alpha^{k}_{s}=\val(G^{s}(\alpha^{k-1})), \text{ \ \ \ \ \ } k=1,2, \ldots
$$ Rephrasing the above in terms of operators, define $T_s$ to be the
operator such that
$$T_{s}\alpha=\val(G^{s}(\alpha)).$$ 
Let $T \alpha=\left[ T_{1}\alpha,
\ldots T_{S}\alpha \right]^{T} $.Then the recursion simply consists of
computing the terms $T^{k}(\alpha)$.

\begin{lemma}
The quantity
$$
\lim_{k \rightarrow \infty} T^{k}(\alpha)=\phi
$$ 
exists and is independent of $\alpha$. Moreover, $\phi$ is the
unique fixed point solution to the equation:
$$
\phi=T\phi.
$$
\label{lem:fixedpoint}
\end{lemma}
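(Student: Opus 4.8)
The plan is to show that the operator $T:\mathbb{R}^S \to \mathbb{R}^S$ is a contraction with respect to the sup-norm $\|\cdot\|_\infty$ with contraction factor $\beta < 1$, and then invoke the Banach fixed point theorem. This gives existence of a unique fixed point $\phi = T\phi$, and guarantees that the iterates $T^k(\alpha)$ converge to $\phi$ for every starting point $\alpha$, which is exactly the statement. The entire difficulty is concentrated in establishing the contraction estimate; once that is in hand, Banach's theorem delivers all three conclusions (convergence, independence of $\alpha$, and uniqueness) simultaneously.

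\emph{First I would} establish the contraction bound. Fix two vectors $\alpha, \gamma \in \mathbb{R}^S$ and a state $s$. By definition, $T_s\alpha = \val(G^s(\alpha))$ where $G^s(\alpha) = r(s,a_1,a_2) + \beta\sum_{t\in\mathcal{S}} p(t;s,a_1,a_2)\alpha_t$ is a polynomial in $(a_1,a_2)$. The key observation is that $G^s(\alpha)$ and $G^s(\gamma)$ differ only in their linear-in-$\alpha$ term, so
\begin{equation*}
G^s(\alpha) - G^s(\gamma) = \beta\sum_{t\in\mathcal{S}} p(t;s,a_1,a_2)(\alpha_t - \gamma_t).
\end{equation*}
Now I apply Lemma~\ref{lem:bound} to the two payoff polynomials $G^s(\alpha)$ and $G^s(\gamma)$:
\begin{equation*}
|T_s\alpha - T_s\gamma| = |\val(G^s(\alpha)) - \val(G^s(\gamma))| \leq \max_{a_1,a_2\in[0,1]} \left| \beta\sum_{t\in\mathcal{S}} p(t;s,a_1,a_2)(\alpha_t - \gamma_t)\right|.
\end{equation*}

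\emph{Next I would} bound the right-hand side. Since the $p(t;s,a_1,a_2)$ are transition probabilities, for each fixed $(a_1,a_2)$ we have $p(t;s,a_1,a_2)\geq 0$ and $\sum_{t\in\mathcal{S}} p(t;s,a_1,a_2) = 1$. Hence the inner sum is a convex combination of the entries $(\alpha_t - \gamma_t)$, so its absolute value is at most $\max_t |\alpha_t - \gamma_t| = \|\alpha - \gamma\|_\infty$, uniformly in $(a_1,a_2)$. This yields $|T_s\alpha - T_s\gamma| \leq \beta\|\alpha-\gamma\|_\infty$ for every state $s$, and taking the maximum over $s$ gives
\begin{equation*}
\|T\alpha - T\gamma\|_\infty \leq \beta\|\alpha - \gamma\|_\infty.
\end{equation*}
Because $0\leq\beta<1$, the map $T$ is a contraction on the complete metric space $(\mathbb{R}^S, \|\cdot\|_\infty)$.

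\emph{The main obstacle} is really just the correct invocation of Lemma~\ref{lem:bound}: one must recognize that the difference of the two games' value functions is controlled by the sup-norm of the \emph{difference of their payoffs}, and that this difference is a probability-weighted average of the coordinate gaps. Apart from that, the argument is routine. I would close by applying the Banach fixed point theorem: a contraction on a complete metric space has a unique fixed point $\phi$, and the iterates $T^k(\alpha)$ converge to $\phi$ for any initial $\alpha\in\mathbb{R}^S$. This establishes that the limit exists, is independent of $\alpha$, and is the unique solution of $\phi = T\phi$, completing the proof. \qed
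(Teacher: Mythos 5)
Your proposal is correct and follows essentially the same route as the paper: apply Lemma~\ref{lem:bound} to the payoff polynomials $G^{s}(\alpha)$ and $G^{s}(\gamma)$, observe that their difference is $\beta\sum_{t}p(t;s,a_{1},a_{2})(\alpha_{t}-\gamma_{t})$, bound this by $\beta\|\alpha-\gamma\|_{\infty}$ using the fact that the transition probabilities sum to one, and conclude via the contraction mapping principle. Your write-up is, if anything, slightly more explicit than the paper's about why the probability-weighted sum is dominated by the sup-norm of the coordinate gaps.
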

\begin{proof}
For $\alpha \in \mathbb{R}^{S}$ define the norm $\| \alpha \|=\max_{s}|\alpha_{s}|$. Then,
$$
\begin{array}{cl}
\|T\gamma -T\alpha \|&=\max_{s}| \val(G^{s}(\gamma))-\val(G^{s}(\alpha))| \\
& \leq \max_{s} \max_{a_{1},a_2 \in[0,1]} | \beta \sum_{t} p(t;s,a_{1}, a_{2})(\gamma_t - \alpha_t)|  \text{ \ \ \ (using Lemma~\ref{lem:bound}) } \\
& \leq \max_{s} \max_{a_{1},a_2 \in[0,1]} | \beta \sum_{t} p(t;s,a_{1}, a_{2})| \max_{t} |(\gamma_t - \alpha_t)| \\
&=\beta \|\gamma -\alpha \|.
\end{array}
$$
Since the discount factor $\beta<1$, we have a contraction, and by the contraction mapping principle, the iteration $T^{k}\alpha$ is convergent to the unique fixed point of the equation $T\phi=\phi$.
\end{proof}
Lemma~\ref{lem:fixedpoint} establishes that a fixed point solution to
the iteration exists. We now show that the fixed point is in fact the
value vector of the game. To show this, we show that if we compute the
optimal strategies $\mu(s), \nu(s)$ to the game $G^{s}(\phi), s=1, 2,
\ldots, S$ then play according to these these strategies achieves the
value vector $\phi$. Since $\phi$ by definition satisfies the saddle
point inequality (1), an equilibrium solution exists. To show that the
value vector is unique, we show that any value vector satisfies the
fixed point equation $Tv_{\beta}=v_{\beta}$. Since there is a unique
fixed point by Lemma~\ref{lem:fixedpoint}, the value vector must be
unique.
\begin{theorem}
Let $\phi$ be the fixed point defined in
Lemma~\ref{lem:fixedpoint}. Then,
\begin{enumerate}

\item[a. ] Let $\mu(s), \nu(s)$ denote the optimal measures to the polynomial game with payoff $G^{s}(\phi)$, $s=\{1, \ldots, S \}.$ Then $\mu=[\mu(1), \ldots, \mu(S)]^{T}, \nu=[\nu(1), \ldots, \nu(S)]^{T}$  are the optimal strategies for the stochastic game.

\item[b. ] If $v_{\beta}(\mu,\nu)$ is a value vector for the game then $v_{\beta}$ satisfies $Tv_{\beta}=v_{\beta}$. Hence $v_{\beta}=\phi$ exists and is unique.

\end{enumerate}
\end{theorem}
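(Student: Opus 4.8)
The plan is to handle the two parts separately, reducing each to the single-stage games $G^{s}(\cdot)$ together with a max-component comparison argument that exploits $\beta<1$ and the row-stochasticity of the transition matrix. Throughout I write $P(\mu,\nu)$ for the matrix with entries $P_{st}(\mu,\nu)=\int_{A_1}\int_{A_2}p(t;s,a_1,a_2)\,d\mu(s)\,d\nu(s)$, so that $\mathbf{v}_{\beta}(\mu,\nu)=(I-\beta P(\mu,\nu))^{-1}\mathbf{r}(\mu,\nu)$; note that none of the argument invokes Assumption SC. For part (a), I would first show $\phi=\mathbf{v}_{\beta}(\mu,\nu)$: since $\mu(s),\nu(s)$ are saddle strategies for $G^{s}(\phi)$, evaluating the value gives $\phi_s=\int\!\int G^{s}(\phi)\,d\mu(s)\,d\nu(s)=r(s,\mu(s),\nu(s))+\beta\sum_t P_{st}(\mu,\nu)\phi_t$, i.e. $(I-\beta P(\mu,\nu))\phi=\mathbf{r}(\mu,\nu)$, and invertibility (from $\beta<1$) identifies $\phi$ with $\mathbf{v}_{\beta}(\mu,\nu)$. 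To verify the saddle inequalities (\ref{eq:saddlepoint}), fix a deviation $\nu'$; the one-stage saddle inequality for $G^{s}(\phi)$ yields $\phi_s\le\int\!\int G^{s}(\phi)\,d\mu(s)\,d\nu'(s)=[\mathbf{r}(\mu,\nu')+\beta P(\mu,\nu')\phi]_s$, hence $\phi\le\mathbf{r}(\mu,\nu')+\beta P(\mu,\nu')\phi$. Subtracting the fixed-point identity for $\mathbf{v}_{\beta}(\mu,\nu')$ gives $e\le\beta P(\mu,\nu')e$ for $e:=\phi-\mathbf{v}_{\beta}(\mu,\nu')$; if $e$ had a positive component $e_{s^\ast}=\max_s e_s>0$, then $e_{s^\ast}\le\beta\sum_t P_{s^\ast t}e_t\le\beta e_{s^\ast}<e_{s^\ast}$, a contradiction, so $\phi\le\mathbf{v}_{\beta}(\mu,\nu')$. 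The left inequality follows symmetrically, establishing that $\mu,\nu$ are equilibrium strategies with value vector $\phi$.

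For part (b), let $v=\mathbf{v}_{\beta}(\mu^0,\nu^0)$ be the value vector of an equilibrium pair $(\mu^0,\nu^0)$. Since $v_s=\int\!\int G^{s}(v)\,d\mu^0(s)\,d\nu^0(s)$, it suffices to prove the two per-state inequalities $\int\!\int G^{s}(v)\,d\mu'(s)\,d\nu^0(s)\le v_s$ for all $\mu'(s)$, and $v_s\le\int\!\int G^{s}(v)\,d\mu^0(s)\,d\nu'(s)$ for all $\nu'(s)$. Together with the minimax theorem for polynomial games (existence of the value, cf.\ \cite{DKS}) these sandwich $\val(G^{s}(v))$ between $v_s$ and $v_s$, forcing $(Tv)_s=\val(G^{s}(v))=v_s$, i.e.\ $Tv=v$; uniqueness of the fixed point (Lemma~\ref{lem:fixedpoint}) then yields $v=\phi$.

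To prove the second inequality I would fix $s$ and $\nu'(s)$ and take $\tilde\nu$ to be the stationary strategy agreeing with $\nu'$ in state $s$ and with $\nu^0$ elsewhere. The saddle property gives $w:=\mathbf{v}_{\beta}(\mu^0,\tilde\nu)\ge v$; writing $\delta:=w-v\ge 0$, the value recursion yields $\delta_t=\beta[P(\mu^0,\nu^0)\delta]_t$ for every $t\ne s$ (since $\tilde\nu$ coincides with $\nu^0$ off $s$), so $\delta_t\le\beta\max_r\delta_r$ for all $t\ne s$. Hence either $\delta\equiv 0$, or the maximum of $\delta$ is attained at $s$, in which case $\delta_t\le\delta_s$ for all $t$ gives $\beta[P(\mu^0,\nu')\delta]_s\le\beta\delta_s\le\delta_s$. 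Substituting $w_s=\int\!\int G^{s}(w)\,d\mu^0(s)\,d\nu'(s)$ and $\delta_t=w_t-v_t$, this reads $\int\!\int G^{s}(v)\,d\mu^0(s)\,d\nu'(s)=v_s+\delta_s-\beta[P(\mu^0,\nu')\delta]_s\ge v_s$, which is exactly the claimed inequality. The first inequality follows symmetrically from the left saddle inequality and a one-state deviation of player~1.

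The routine part is (a), which is linear algebra plus the contraction-type comparison already used in Lemma~\ref{lem:fixedpoint}. The delicate point is (b): a deviation in a single state propagates through the coupled value equations, so one cannot argue state-by-state naively. The crux is that restricting the deviation to state $s$ keeps the off-$s$ rows of the transition matrix equal to those of $\nu^0$, so the discrepancy satisfies $\delta_t=\beta[P(\mu^0,\nu^0)\delta]_t$ off $s$; combined with $\beta<1$ and row-stochasticity, this forces the maximal discrepancy to concentrate at the deviating state, which is precisely what converts the global vector inequality $w\ge v$ into the desired scalar per-state inequality.
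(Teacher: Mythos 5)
Your proof is correct, and it differs from the paper's in instructive ways. For part (a) both arguments start from the one-stage saddle inequality for $G^{s}(\phi)$; the paper then unrolls the recursion ($\phi = T^{k}\phi$, and $\phi \leq \Ex_{\mu,\nu'}[r + \beta P r + \cdots + \beta^{k}P^{k}\phi]$, letting $k \to \infty$ so the trailing term vanishes), whereas you subtract the two fixed-point identities and kill the error vector $e \leq \beta P(\mu,\nu')e$ by a maximal-component argument. These are two renderings of the same fact, namely entrywise nonnegativity of $(I-\beta P)^{-1} = \sum_{k}\beta^{k}P^{k}$, so here the difference is presentational, though your version avoids any passage to the limit. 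The genuine divergence is in part (b). The paper disposes of it in one sentence: any value vector satisfying the saddle-point inequality \eqref{eq:saddlepoint} satisfies $Tv_{\beta} = v_{\beta}$ ``by definition,'' after which uniqueness of the fixed point (Lemma~\ref{lem:fixedpoint}) finishes. But that implication is not a definition: the saddle inequality quantifies over stationary strategies of the stochastic game, while $Tv_{\beta}=v_{\beta}$ asserts that $v_{\beta}(s)$ is the value of the auxiliary one-shot game $G^{s}(v_{\beta})$, and an argument is needed to pass from one to the other. Your single-state-deviation construction is exactly that bridge: freezing the deviation off state $s$ forces the discrepancy $\delta = w - v \geq 0$ to satisfy $\delta_{t} = \beta[P\delta]_{t}$ for $t \neq s$, hence to attain its maximum at $s$ (when nonzero), which converts the global vector inequality $w \geq v$ into the per-state scalar inequality $v_{s} \leq \int\!\!\int G^{s}(v)\,d\mu^{0}(s)\,d\nu'(s)$ needed to conclude $\val(G^{s}(v)) = v_{s}$; the symmetric inequality for player 1 then pins down the value. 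In short, the paper's route buys brevity at the cost of a real gap in (b), while yours is self-contained; and, like the paper's argument, yours nowhere invokes Assumption SC, consistent with the paper's remark that the results of Section~\ref{sec:existence} do not depend on the single-controller structure.
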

\begin{proof}
Let $\mu(s)$ and $\nu(s)$ be the optimal strategies for the game
$G^{s}(\phi)$. Then by definition, the expected value of play under
these strategies will be $\phi_{s}=T_{s}\phi=
\ldots=T_{s}^{k}\phi$. Vectorizing this equation, we note that
$$
\phi=T^{k}\phi=\Ex_{\mu, \nu} [r(a_1, a_2)+\beta P(a_1,
a_2)r(a_1,a_2)+ \cdots+ \beta^{k-1}P^{k-1}(a_1, a_2)r(a_1,
a_2)+\beta^{k}P^{k}(a_1,a_2)\phi].
$$ 
Taking the limit as $k \rightarrow \infty$, we obtain that
$\phi=\Ex_{\mu, \nu}[\sum_{k=0}^{\infty} \beta^{k}P^{k}(a_{1},
a_2)r(a_1, a_2)]=v_{\beta}(\mu, \nu)$.  Hence playing according to the
stationary strategies $\mu(s), \nu(s), s=1, \ldots, S$ achieves the
value vector $\phi$.  Suppose player $1$ plays according to the
strategy $\mu$, and suppose player $2$ deviates from the prescribed
stationary strategy $\nu$ to stationary strategy $\nu \p$.  Then,
since $\mu, \nu$ are defined to be an equilibrium strategies for the
game $G^{s}(\phi)$, we have the (vector) inequality for all $\nu \p$:
$$
\begin{array}{cl}
\phi & =\Ex_{\mu, \nu}[r(a_1, a_2)+\beta P(a_1, a_2)\phi] \\
& \leq \Ex_{\mu, \nu\p}[r(a_1, a_2)+\beta  P(a_1, a_2) \phi] \\
& \leq \Ex_{\mu, \nu\p}[r(a_1, a_2)+\beta  P(a_1, a_2) r(a_1, a_2) + \beta^{2}  P^{2}(a_1, a_2) \phi ]\\
& \vdots \\
& \leq \Ex_{\mu, \nu\p}[r(a_1, a_2)+\beta  P(a_1, a_2) r(a_1, a_2) +
\cdots+ \beta^{k}  P^{k} (a_1, a_2)r(a_1, a_2) + \beta^{k}
P^{k}(a_1, a_2)\phi].
\end{array}
$$ 
In the first inequality a $\phi$ occurs on the right side. We
substitute that inequality in the $\phi$ on the right side to obtain
the second inequality and so on. Finally, we obtain the inequality:
$$
\phi=\Ex_{\mu, \nu}\left[\sum_{k=0}^{\infty} \beta^{k}P^{k}(a_1, a_2)r(a_1, a_2)\right]\leq \Ex_{\mu, \nu\p}\left[\sum_{k=0}^{\infty} \beta^{k}P^{k}(a_1, a_2)r(a_1, a_2) \right],
$$ 
i.e. that $\phi=v_{\beta}(\mu, \nu)\leq v_\beta(\mu, \nu \p)$ for
all $\nu\p$. A similar argument for deviations $\mu \p$ of player $1$
shows that $v_{\beta}(\mu \p, \nu) \leq v_{\beta}(\mu,
\nu)=\phi$. Hence $\mu(s), \nu(s)$ constructed as the strategies for
the games $G^{s}(\phi)$ satisfy the saddle point inequality
\eqref{eq:saddlepoint} component-wise. This establishes the existence
of equilibria. For uniqueness, note that any strategies $\mu, \nu$
such that $v_\beta(\mu, \nu)$ satisfies the saddle point inequality
\eqref{eq:saddlepoint}, by definition we have $Tv_{\beta}(\mu,
\nu)=v_{\beta}(\mu, \nu)$. Since $T$ has a unique fixed point, the
vector $v_{\beta}(\mu, \nu)$ must be unique.
\end{proof}

It is interesting to note that the above proof also provides an
algorithm to compute approximate equilibria. To compute each iterate
$T_{s}(\alpha)$ one needs to solve a polynomial game in normal form
(which can be done by solving a single semidefinite program), and by
solving a sequence of such problems, one can compute $T^{k}(\alpha)$
which is provably close to the actual value-vector. However, the
rate of convergence of this iteration is not very attractive. In the
rest of this paper, we focus attention on \emph{single-controller
games}, for which equilibria can be computed by solving a single
semidefinite program.

\subsection{SDP Characterization of Nonnegativity and Moments}
Let $A$ be a closed interval on the real line. The set of univariate
polynomials which are nonnegative on $A$ have an exact semidefinite
description. The set of (finite) vectors in $\mathbb{R}^{n}$ which
correspond to moment sequences of measures supported on $A$ also
have an exact semidefinite description. We briefly review
these notions here and introduce some related notation \cite{Par}.\\
\indent Let $\mathbb{R}[x]$ denote the set of univariate polynomials
with real coefficients. Let $p(x)=\sum_{k=0}^{n}p_{k}x^{k}\in
\mathbb{R}[x]$. We say that $p(x)$ is nonnegative on $A$ if
$p(x)\geq 0$ for every $x\in A$. We denote the set of nonnegative
polynomials of degree $n$ which are nonnegative on $A$ by
$\mc{P}(A)$. (To avoid cumbersome notation, we exclude the degree
information in the notation. Moreover the degree will usually be
clear from the context.) The polynomial $p(x)$ is said to be a
\emph{sum of squares} if there exist polynomials $q_{1}(x),\ldots,
q_{k}(x)$ such that $p(x)=\sum_{i=1}^{k}q_i(x)^{2}$. It is well known
that a
univariate polynomial is a sum of squares if and only if $p(x)\in \mc{P}(\mathbb{R})$.\\
 \indent Let $\mu$ denote a measure supported on the set $A$.
The $i^{th}$ moment of the measure $\mu$ is denoted by
$$\mu_{i}=\int_{A}x^{i}d\mu.$$
 Let $\bar{\mu}=[\mu_{0}, \ldots, \mu_{n}]$ be a vector in
$\mathbb{R}^{n+1}$. We say that $\bar{\mu}$ is a \emph{moment
sequence} of length $n+1$ if it corresponds to the first $n+1$
moments of some nonnegative measure $\mu$ supported on the set $A$.
The \emph{moment space}, denoted by $\mc{M}(A)$ is the subset of
$\mathbb{R}^{n+1}$ which corresponds to moments of nonnegative
measures supported on the set A. We say that a nonnegative measure
$\mu$ is a \emph{probability measure} if its zeroth order moment
satisfies $\mu_{0}=1$. The set of moment sequences of length $n+1$
corresponding to
probability measures is denoted by $\mc{M}_{P}(A)$.\\
\indent Let $\mc{S}^{n}$ denote the set of $n \times n$ symmetric
matrices and define the linear operator $\mc{H}:
\mathbb{R}^{2n-1}\rightarrow \mc{S}^{n}$ as:
$$
\mc{H}: \left[ \begin{array}{c} a_{1} \\ a_{2} \\ \vdots \\ a_{2n-1}
\end{array} \right] \mapsto \left[ \begin{array}{cccc} a_{1} &
a_{2}&
\ldots & a_{n} \\
a_{2} & a_{3} & \ldots & a_{n+1} \\
\vdots & \vdots & \ddots & \vdots \\
a_{n}& a_{n+1} & \ldots & a_{2n-1} \\
\end{array} \right].
$$
Thus $\mc{H}$ is simply the linear operator that takes a vector and
constructs the associated Hankel matrix which is constant along the
antidiagonals. We will also frequently use the adjoint of this
operator, the linear map $\mc{H}^{*}: \mc{S}^{n}\rightarrow
\mathbb{R}^{2n-1}$:
$$
\mc{H}^{*}: \left[ \begin{array}{cccc} m_{11} & m_{12}&
\ldots & m_{1n} \\
m_{12} & m_{22} & \ldots & m_{2n} \\
\vdots & \vdots & \ddots & \vdots \\
m_{1n}& m_{2n} & \ldots & m_{nn} \\
\end{array} \right] \mapsto \left[ \begin{array}{c} m_{11} \\ 2m_{12} \\ m_{22}+2m_{13} \\ \vdots \\
m_{nn}
\end{array} \right].
$$
This map flattens a matrix into a vector by adding all the entries
along antidiagonals.
\begin{lemma}
\label{lem:sos}
Let $p(x)=\sum_{k=0}^{2n}p_{k}x^{k}$ be a polynomial. Let
$\bar{p}=[p_{0}, \ldots, p_{2n}]^{T}$ be the vector of its
coefficients. Then $p(x)$ is nonnegative (or SOS) if and only if
there exists $S\in \mc{S}^{n+1}$, $S\succeq 0$ such that:
$$
\bar{p}=\mc{H}^{*}(S).
$$
\end{lemma}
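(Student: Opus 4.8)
The plan is to reduce the statement to the Gram-matrix description of sums of squares and then to read off the coefficient identity directly from the fact that $\mc{H}^{*}$ is the adjoint of $\mc{H}$. First I would introduce the monomial vector $v(x)=[1,x,\ldots,x^{n}]\t\in\IR^{n+1}$ and record the single structural fact on which the whole argument rests: the rank-one matrix $v(x)v(x)\t$ has $(i,j)$ entry $x^{i+j-2}$ and is therefore constant along antidiagonals, i.e.\ it is exactly the Hankel matrix $\mc{H}(m(x))$ associated with the moment-like vector $m(x)=[1,x,\ldots,x^{2n}]\t$.

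With this in hand, the key identity follows purely formally from the definition of the adjoint. For any $S\in\mc{S}^{n+1}$,
$$
v(x)\t S v(x)=\langle S, v(x)v(x)\t\rangle=\langle S,\mc{H}(m(x))\rangle=\langle\mc{H}^{*}(S),m(x)\rangle=\sum_{k=0}^{2n}\bigl(\mc{H}^{*}(S)\bigr)_{k}\,x^{k},
$$
so that the coefficient vector of the polynomial $x\mapsto v(x)\t S v(x)$ is precisely $\mc{H}^{*}(S)$. Equivalently, $\bar{p}=\mc{H}^{*}(S)$ holds if and only if $p(x)=v(x)\t S v(x)$ identically. This converts the lemma into the assertion that $p$ is nonnegative if and only if it admits a positive-semidefinite Gram representation $p(x)=v(x)\t S v(x)$.

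For the ``if'' direction, suppose $\bar{p}=\mc{H}^{*}(S)$ with $S\succeq 0$. Factoring $S=\sum_{i}\ell_{i}\ell_{i}\t$ (via an eigendecomposition or a Cholesky factorization) and using the identity above gives $p(x)=v(x)\t S v(x)=\sum_{i}\bigl(\ell_{i}\t v(x)\bigr)^{2}$, a sum of squares, hence nonnegative. For the ``only if'' direction I would invoke the quoted fact that a nonnegative univariate polynomial is a sum of squares, writing $p(x)=\sum_{i}g_{i}(x)^{2}$. A degree count shows each $g_{i}$ has degree at most $n$, so $g_{i}(x)=q_{i}\t v(x)$ for some $q_{i}\in\IR^{n+1}$; setting $S=\sum_{i}q_{i}q_{i}\t\succeq 0$ yields $p(x)=v(x)\t S v(x)$, and matching coefficients through the identity gives $\bar{p}=\mc{H}^{*}(S)$.

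The genuinely content-bearing ingredients are the two cited facts---the equivalence of nonnegativity and SOS for univariate polynomials (which we are allowed to assume) and the existence of a positive-semidefinite factorization of $S$---after which the correspondence $p=v\t S v\Leftrightarrow\bar{p}=\mc{H}^{*}(S)$ is a one-line consequence of adjointness. The only step demanding real care is the degree bookkeeping: I expect the main obstacle to be verifying that when $\deg p=2n$ the leading coefficient is nonnegative and that no square $g_{i}$ in the decomposition can exceed degree $n$, so that the resulting Gram matrix genuinely lives in $\mc{S}^{n+1}$ and the lengths of $\bar{p}$ and $\mc{H}^{*}(S)$ match.
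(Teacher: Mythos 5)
Your proof is correct and follows essentially the same route as the paper's: both hinge on the adjointness identity $v(x)\t S v(x)=\langle\mc{H}^{*}(S),m(x)\rangle$ (the paper writes it as $\bar{p}\t[x]_{2n}=\mc{H}^{*}(S)\t[x]_{2n}=[x]_{n}\t S[x]_{n}$), factor $S\succeq 0$ to get a sum of squares in one direction, and invoke the univariate nonnegativity-equals-SOS fact in the other. The only difference is that you spell out the converse (degree count and Gram matrix construction) which the paper dismisses as ``immediate.''
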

\begin{proof}
For univariate polynomials, nonnegativity is equivalent to SOS (see
\cite{Par2}). Let $[x]_{n}=[1, x, \ldots, x^{n}]^{T}.$ We have for
every $S\in \mc{S}^{n+1}$,
$$p(x)=\bar{p}^{T}[x]_{2n}=\mc{H}^{*}(S)^{T}[x]_{2n}=[x]_{n}^{T}S[x]_{n}.$$
Factoring $S\succeq 0$, we obtain a sum of squares decomposition.
The converse is immediate.
\end{proof}
One can give a similar semidefinite characterization of polynomials
that are nonnegative on an interval. Since in this paper we are
typically considering the interval to be $[0,1]$ we give an explicit
semidefinite characterization of $\mc{P}([0,1])$. We define the
following matrices:
\begin{equation*}
\begin{array}{cc}
L_{1}=\left[ \begin{array}{c} I_{n \times n} \\
0_{1 \times n} \end{array} \right], & L_{2}=\left[ \begin{array}{c}
0_{1 \times n} \\ I_{n \times n} \end{array} \right],
\end{array}
\end{equation*}
where $I_{n \times n}$ stands for the $n \times n$ identity matrix.
\begin{lemma}
\label{lem:sos01}
The polynomial $p(x)=\sum_{k=0}^{2n}p_{k}x^{k}$ is nonnegative on
$[0,1]$ if and only if there exist matrices $Z\in \mc{S}^{n+1}$ and
$W\in \mc{S}^{n}$, $Z\succeq 0, W\succeq 0$ such that
$$ \left[ \begin{array}{c} p_{0} \\  \vdots\\ p_{2n}
\end{array}
\right]=\mc{H}^{*}(Z+\frac{1}{2}(L_{1}WL_{2}^{T}+L_{2}WL_{1}^{T})-L_{2}WL_{2}^{T}).
$$
\end{lemma}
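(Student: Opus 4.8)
The plan is to reduce the claim to the classical representation of univariate polynomials that are nonnegative on an interval, and then to recast that representation as the stated matrix identity by means of the operator $\mc{H}^{*}$.

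I would first dispatch the sufficiency direction, which is the easy one. Given $Z\succeq 0$ and $W\succeq 0$, I would factor them as in Lemma~\ref{lem:sos} and set $\sigma_{0}(x)=[x]_{n}^{T}Z[x]_{n}$ and $\sigma_{1}(x)=[x]_{n-1}^{T}W[x]_{n-1}$, so that both are sums of squares. Once the key identity below is in hand, the relation $\bar{p}=\mc{H}^{*}(Z+\frac{1}{2}(L_{1}WL_{2}^{T}+L_{2}WL_{1}^{T})-L_{2}WL_{2}^{T})$ forces $p(x)=\sigma_{0}(x)+x(1-x)\sigma_{1}(x)$, which is nonnegative on $[0,1]$ since $x(1-x)\geq 0$ there while $\sigma_{0},\sigma_{1}\geq 0$ everywhere.

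For the necessity direction I would invoke the Markov--Lukács theorem, the exact degree-bounded certificate of positivity on an interval: every polynomial of degree at most $2n$ that is nonnegative on $[0,1]$ admits a decomposition $p=\sigma_{0}+x(1-x)\sigma_{1}$ with $\sigma_{0}$ a sum of squares of degree at most $2n$ and $\sigma_{1}$ a sum of squares of degree at most $2n-2$. Writing $\sigma_{0}(x)=[x]_{n}^{T}Z[x]_{n}$ and $\sigma_{1}(x)=[x]_{n-1}^{T}W[x]_{n-1}$ with $Z\in\mc{S}^{n+1}$, $W\in\mc{S}^{n}$ and $Z,W\succeq 0$ then produces exactly the matrices required by the statement.

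The algebraic core common to both directions is a single identity. First I would record the two elementary relations $L_{1}^{T}[x]_{n}=[x]_{n-1}$ and $L_{2}^{T}[x]_{n}=x[x]_{n-1}$, immediate from the block structure of $L_{1}$ and $L_{2}$. Substituting these into the quadratic form and using $W=W^{T}$ gives
$$
[x]_{n}^{T}\Big(\tfrac{1}{2}(L_{1}WL_{2}^{T}+L_{2}WL_{1}^{T})-L_{2}WL_{2}^{T}\Big)[x]_{n}=x(1-x)\,[x]_{n-1}^{T}W[x]_{n-1},
$$
so the matrix in parentheses is precisely a Gram matrix, in the basis $[x]_{n}$, of $x(1-x)\sigma_{1}(x)$. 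Adding $Z$, whose associated form is $\sigma_{0}$, and applying $\mc{H}^{*}$, which by the computation in the proof of Lemma~\ref{lem:sos} returns the coefficient vector of the polynomial $[x]_{n}^{T}(\cdot)[x]_{n}$, would identify $\mc{H}^{*}(Z+\frac{1}{2}(L_{1}WL_{2}^{T}+L_{2}WL_{1}^{T})-L_{2}WL_{2}^{T})$ with the coefficient vector of $\sigma_{0}+x(1-x)\sigma_{1}$, closing both directions at once.

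I expect the main obstacle to be the necessity direction: the existence of the certificate $p=\sigma_{0}+x(1-x)\sigma_{1}$ with the stated degree bounds is exactly the content of the Markov--Lukács theorem, and the matching of degrees (that $\sigma_{0}$ needs only $Z\in\mc{S}^{n+1}$ while $\sigma_{1}$ needs only $W\in\mc{S}^{n}$) must be tracked carefully. By contrast, the matrix identity and the role of $\mc{H}^{*}$ are routine once the two relations for $L_{1}^{T}[x]_{n}$ and $L_{2}^{T}[x]_{n}$ are established.
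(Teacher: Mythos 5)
Your proof is correct and takes essentially the same route as the paper: the paper's proof also rests on the Markov--Luk\'{a}cs characterization $p(x)=z(x)+x(1-x)w(x)$ with $z,w$ sums of squares, followed by an application of Lemma~\ref{lem:sos} to convert each SOS piece into a positive semidefinite Gram matrix. The only difference is one of detail---the paper leaves implicit the identity $[x]_{n}^{T}\bigl(\tfrac{1}{2}(L_{1}WL_{2}^{T}+L_{2}WL_{1}^{T})-L_{2}WL_{2}^{T}\bigr)[x]_{n}=x(1-x)[x]_{n-1}^{T}W[x]_{n-1}$ and the degree bookkeeping, which you have written out explicitly.
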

\begin{proof}
The proof follows from the characterization of nonnegative
polynomials on intervals. It is well known that
$$
p(x)\geq 0 \text{ \ \ } \forall x \in [0,1] \Leftrightarrow
p(x)=z(x)+x(1-x)w(x),
$$
where $z(x)$ and $w(x)$ are sums of squares. A simple application of
Lemma~\ref{lem:sos} yields the required condition.
\end{proof}

In this paper, we will also be using a very important classical
result about the semidefinite representation of moment spaces
\cite{Karlin,Shohat}. We give an explicit characterization of
$\mc{M}([0,1])$ and $\mc{M}_{P}([0,1])$.
\begin{lemma}
The vector $\bar{\mu}=[\mu_{0}, \mu_{1}, \ldots, \mu_{2n}]^{T}$ is a
valid set of moments for a nonnegative measure supported on $[0,1]$
if and only if
\begin{equation}
\begin{aligned}
\mc{H}(\bar{\mu})& \succeq  0 \\
\frac{1}{2}(L_{1}^{T}\mc{H}(\bar{\mu})L_{2}+L_{2}^{T}\mc{H}(\bar{\mu})L_{1})-L_{2}^{T}\mc{H}(\bar{\mu})L_{2}& \succeq 0.
\end{aligned}
\label{eq:mom01}
\end{equation}
Moreover, it is a moment sequence corresponding to a probability
measure if and only if in addition to~\eqref{eq:mom01} it satisfies
$\mu_{0}=1$.
\label{lem:mom01}
\end{lemma}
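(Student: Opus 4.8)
\section*{Proof plan}

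The two stated conditions are exactly the positive-semidefiniteness of the Hankel (moment) matrix $\mc{H}(\bar{\mu})$ and of the localizing matrix associated with the polynomial $x(1-x)$, so the plan is to exploit the duality between the moment cone $\mc{M}([0,1])$ and the cone $\mc{P}([0,1])$ of nonnegative polynomials already characterized in Lemma~\ref{lem:sos01}. The pairing I would use is $\langle \bar{p}, \bar{\mu}\rangle=\sum_k p_k \mu_k = \int_{[0,1]} p\, d\mu$, and the one algebraic fact everything hinges on is the adjointness $\langle \mc{H}(\bar{\mu}), S\rangle = \langle \bar{\mu}, \mc{H}^{*}(S)\rangle$, which is built into the definitions of $\mc{H}$ and $\mc{H}^{*}$.

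For the forward (``only if'') direction I would argue directly. If $\bar{\mu}$ is the truncated moment sequence of a nonnegative measure $\mu$ on $[0,1]$, then writing $[x]_n=[1,x,\ldots,x^n]^{T}$ one has $\mc{H}(\bar{\mu})=\int_{[0,1]}[x]_n[x]_n^{T}\,d\mu$, manifestly PSD as an integral of rank-one PSD matrices. A short index computation then identifies the second matrix $\frac{1}{2}(L_1^{T}\mc{H}(\bar{\mu})L_2+L_2^{T}\mc{H}(\bar{\mu})L_1)-L_2^{T}\mc{H}(\bar{\mu})L_2$ with $\int_{[0,1]}x(1-x)[x]_{n-1}[x]_{n-1}^{T}\,d\mu$: the projections $L_1,L_2$ shift the Hankel indices so that the symmetrized cross term contributes the moments of $x$ and the last term those of $x^2$. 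Since $x(1-x)\ge 0$ on $[0,1]$, this localizing matrix is again an integral of PSD matrices, hence PSD.

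For the converse (``if''), which is the substantive part, I would pass to duality. The computation above, read through the adjoint identity, shows that the set cut out by the two semidefinite constraints is precisely the dual cone $\mc{P}([0,1])^{*}$: testing $\langle \bar{p}, \bar{\mu}\rangle\ge 0$ against the representation $\bar{p}=\mc{H}^{*}(Z+\tfrac{1}{2}(L_1 W L_2^{T}+L_2 W L_1^{T})-L_2 W L_2^{T})$ from Lemma~\ref{lem:sos01}, and ranging over $Z\succeq 0$, $W\succeq 0$, yields exactly $\mc{H}(\bar{\mu})\succeq 0$ together with the localizing condition. It then remains to show $\mc{P}([0,1])^{*}=\mc{M}([0,1])$. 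Evaluating against Dirac measures shows $\mc{M}([0,1])^{*}=\mc{P}([0,1])$, so biduality gives $\mc{P}([0,1])^{*}=\mc{M}([0,1])^{**}=\overline{\mc{M}([0,1])}$, and the claim reduces to closedness of the truncated moment cone on the compact set $[0,1]$.

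The hard part is precisely this closedness/realizability step, i.e.\ producing an honest representing measure from a sequence satisfying the two conditions. Here I would either invoke the classical solution of the truncated Hausdorff moment problem \cite{Karlin,Shohat}, or construct the measure explicitly by a Gaussian-quadrature argument, placing atoms at the (real, and provably in $[0,1]$) generalized eigenvalues associated with the Hankel and shifted-Hankel matrices and choosing weights to match the prescribed moments. Finally, the probability-measure refinement is immediate: since $\mu_0=\int_{[0,1]}d\mu$, requiring a probability measure is exactly the extra affine constraint $\mu_0=1$.
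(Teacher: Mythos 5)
Your proposal is correct and is essentially the paper's own approach made explicit: the paper's entire proof reads ``dualize Lemma~\ref{lem:sos01}; alternatively, a direct proof may be found in \cite{Karlin}'', and you carry out precisely that dualization while deferring the realizability step to the same classical references. If anything, your write-up is more careful than the paper's one-liner, since you correctly flag that duality alone only identifies the constraint set with $\mc{P}([0,1])^{*}=\overline{\mc{M}([0,1])}$, so closedness of the truncated moment cone over the compact interval $[0,1]$ (equivalently, the classical truncated Hausdorff moment result in \cite{Karlin,Shohat}, or your quadrature construction) is genuinely needed to finish---a subtlety the paper's proof glosses over.
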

\begin{proof}
The proof follows by dualizing Lemma~\ref{lem:sos01}. Alternatively, a
direct proof may be found in \cite{Karlin}.
\end{proof}

For example, for $2n=2$ the sequence $\left[ \mu_{0}, \mu_1, \mu_2
\right]$ is a moment sequence corresponding to a measure supported
on $[0,1]$ if and only if the following inequalities are true:
\begin{align*}
\left[ \begin{array}{cc} \mu_0 & \mu_1 \\ \mu_1 & \mu_2 \end{array}
\right] &\succeq 0 \\
\mu_1-\mu_2 &\geq 0.
\end{align*}

\section{Finite Strategy Case}
\label{sec:finite}

For the reader's convenience and comparison purposes, we briefly
review here the case where each player has only finitely many
strategies at each state \cite{Koos}. Again, for simplicity we
assume that the set of pure strategies available to each player at
each state is identical so that $A_{1}=A_{2}=\{1,\ldots,m\}$. Under
the finite strategy case, when assumption $SC$ holds, a minimax
solution may be computed via linear programming. We state the linear
program in this section. In the next section, drawing motivation
from this linear program, we write an infinite dimensional
optimization problem for the case where each player has a choice
from infinitely many pure strategies.
 The finite action game
is completely defined via the specification of the following data:
\begin{enumerate}
\item The state space $\mathcal{S}=\{1,\ldots,S\}$.
\item The (finite) sets of actions for players $1$ and $2$ given by
$A_{1}=A_{2}=\{1,\ldots,m\}$.
\item The payoff function for a given state $s$ (representable by a matrix indexed by the actions of each players)
denoted by $r(s,a_{1},a_{2})$.
\item The probability transition matrix $p(s^{\prime};s,a_{1})$ which provides the conditional probability of
transition from state $s$ to $s^{\prime}$ given player $1$'s action
$a_{1}$.
\item The discount factor $\beta$.
\end{enumerate}
A \emph{mixed} strategy for player $1$ is a function
$f:\mathcal{S}\times A_{1} \rightarrow [0,1]$ subject to the
normalization constraint $\sum_{a_{1}}f(s,a_{1})=1$ for each
$s\in\mathcal{S}$ (so that $f(s)=\left[ f(s,1),\ldots,f(s,m)
\right]$ becomes a probability distribution over the strategy space
$A_{1})$. Similarly the mixed strategy for player $2$ in a
particular state $s$ is given by $g(s)=\left[g(s,1),\ldots,g(s,m)
\right]$. The collection of mixed strategies (indexed by the states)
will be denoted by $\mathbf{f}=\left[f(1),\ldots,f(S)\right]$ (and
$\mathbf{g}=\left[g(1),\ldots,g(S)\right]$ respectively). A strategy
$\mathbf{f}$ leads to a probability matrix
$P(\mathbf{f})=\sum_{a_{1}\in
A_{1}}p(s^{\prime};s,a_{1})f(s,a_{1})$. Again we consider a
$\beta$-discounted process over an infinite horizon. Given
strategies $\mathbf{f}$ and $\mathbf{g}$, the reward collected by
player $1$ in some stage $s$ is given by:
$$r(s,f(s),g(s))=\sum_{a_{1}\in A_{1},a_{2}\in
A_{2}}r(s,a_{1},a_{2})f(s,a_{1})g(s,a_{2}).$$ The reward collected
over the infinite horizon starting at state $s$,
$v_{\beta}(s,f(s),g(s))$, is given by the system of equations:
\begin{equation*}
\begin{array}{l}
v_{\beta}(s,f(s),g(s))=r(s,f(s),g(s))+ \\ \text{ \
}\beta\sum_{s\p\in \mc{S}} \left( \sum_{a_{1}\in A_{1}}
p(s^{\prime};s,a_{1})f(s,a_{1})\right) v_{\beta}(s^{\prime},f(s \p),
g(s \p)).
\end{array}
\end{equation*}
Thus,
$$\mathbf{v}_{\beta}(\mathbf{f},\mathbf{g})=(I-\beta P(\mathbf{f}))^{-1}\mathbf{r}(\mathbf{f},\mathbf{g}),$$
where $\mathbf{r}(\mathbf{f},\mathbf{g})=\left[r(1,f(1),g(1)),
\ldots, r(S,f(S),g(S)) \right]\in \mathbb{R}^{S}.$ The problem is to
find equilibrium strategies $\mathbf{f}^{0}$ and $\mathbf{g}^{0}$
that satisfy the Nash equilibrium property:
\begin{equation} \label{eq:3}
\mathbf{v}_{\beta}(\mathbf{f},\mathbf{g}^{0}) \leq
\mathbf{v}_{\beta}(\mathbf{f}^{0},\mathbf{g}^{0}) \leq
\mathbf{v}_{\beta}(\mathbf{f}^{0},\mathbf{g})
\end{equation}
for all mixed strategies $\mathbf{f}, \mathbf{g}$.

\begin{theorem}[\cite{Koos}]
Consider the primal-dual pair of linear programs:
\begin{equation}
\begin{array}{ll}
& \text{\textrm{minimize}     }\sum_{s=1}^{S} v(s) \\
& g(s,a_{2}),v(s)\\
& \\
& v(s) \geq \sum_{a_{2}\in A_{2}}r(s,a_{1},a_{2})g(s,a_{2})+ \\
& \text{\indent \indent
\indent}\beta\sum_{s^{\prime}=1}^{S}p(s^{\prime};s,a_{1})v(s^{\prime})
\text{\indent} \forall s\in\mathcal{S}, a_{1}\in A_{1} \\
& \\
& \sum_{a_{2}\in A_{2}}g(s,a_{2})=1 \text{\indent} \forall s\in\mathcal{S} \\
& \\
& g(s,a_{2})\geq 0 \text{\indent} \forall  s\in\mathcal{S}, a_{2}\in A_{2}.
\end{array}
\tag{$P$}
\end{equation}
and
\begin{equation}
\begin{array}{ll}
& \text{\textrm{maximize}     }\sum_{s=1}^{S} z(s) \\
& x(s,a_{1}),z(s) \\
& \\
& \sum_{s=1}^{S} \sum_{a_{1}\in A_{1}} [\delta(s,s^{\prime})-\beta
p(s^{\prime},s,a_{1})]x({s,a_{1}})=1 \text{ \ \ \ } \forall
s \p \in\mathcal{S}\\
& \\
& z(s)\leq \sum_{a_{1}\in A_{1}} x(s,a_{1})r(s,a_{1},a_{2}) \text{ \ \ \ } \forall s\in\mathcal{S}, a_{2}\in A_{2},\\
& \\
& x(s,a_{1})\geq 0, \text{\indent} \forall  s\in\mathcal{S},
a_{1}\in A_{1}.
\end{array}
\tag{$D$}
\end{equation}

Let $p^{*}$ be the optimal value of $(P)$, and $d^{*}$ be the
optimal value of $(D)$. Let $x^{*}(s,a_{1})$ be the optimal values
of the $x(s,a_{1})$ variables obtained in $(D)$. Let
$$ f^{*}(s,a_{1})=\frac{x^{*}(s,a_{1})}{\sum_{a_{1}}x^{*}(s,a_{1})}
$$ and $g^{*}(s,a_{2})$ be the distribution  obtained by the optimal
solution of $(P)$. Then the following statements hold:
\begin{enumerate}
\item $p^{*}=d^{*}$. 
\item Let $\mathbf{v}^{*}=[v^{*}(1),\ldots,v^{*}(S)]$ be the optimal
solution of $(P)$. Then
$\mathbf{v}^{*}=\mathbf{v}_{\beta}(\mathbf{f}^{*},\mathbf{g}^{*}).$
\item $\mathbf{v}_{\beta}(\mathbf{f}^{*},\mathbf{g}^{*})$ satisfies the
saddle-point inequality (\ref{eq:3}).
\end{enumerate}
\end{theorem}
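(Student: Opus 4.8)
The plan is to treat the three claims in turn, using linear-programming duality to connect the programs $(P)$ and $(D)$ to the Bellman-type recursion for the value vector, and to invoke the single-controller structure only where it is genuinely needed.

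For claim (1) I would first check mechanically that $(D)$ is the LP dual of $(P)$: the free variables $v(s)$ pair with the equality constraints of $(D)$, the free variables $z(s)$ with the normalization equalities of $(P)$, and the sign-constrained $g(s,a_2)\ge 0$, $x(s,a_1)\ge 0$ with the two families of inequalities. Strong duality $p^{*}=d^{*}$ then follows from the LP duality theorem once both programs are feasible. Feasibility of $(P)$ is obtained by fixing any probability vector $g(s)$ and taking $v$ at or above the fixed point of the contraction $v(s)\mapsto\max_{a_1}\bigl[\,r(s,a_1,g(s))+\beta\sum_{s'}p(s';s,a_1)v(s')\,\bigr]$, which exists because $\beta<1$. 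Feasibility of $(D)$ comes from a discounted occupation measure: for any stationary $\mathbf{f}$ the vector $x(s,a_1)=\xi(s)f(s,a_1)$ with $\xi^{T}=\mathbf{1}^{T}(I-\beta P(\mathbf{f}))^{-1}$ is entrywise nonnegative, since $(I-\beta P(\mathbf{f}))^{-1}=\sum_{k\ge 0}\beta^{k}P(\mathbf{f})^{k}\succeq 0$, and it satisfies the balance equations of $(D)$.

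Next I would extract the structural consequences of optimality via complementary slackness. Reading the equality of $(D)$ for each fixed state gives $\xi(s)=\sum_{a_1}x^{*}(s,a_1)=1+\beta\sum_{s',a_1}p(s;s',a_1)x^{*}(s',a_1)\ge 1>0$, so $\mathbf{f}^{*}$ is well defined. Whenever $x^{*}(s,a_1)>0$ (equivalently $f^{*}(s,a_1)>0$) the matching inequality of $(P)$ is tight; multiplying this equality by $f^{*}(s,a_1)$ and summing over $a_1$ (the terms with $f^{*}(s,a_1)=0$ dropping out) yields $v^{*}(s)=r(s,f^{*}(s),g^{*}(s))+\beta\sum_{s'}P_{ss'}(\mathbf{f}^{*})v^{*}(s')$, i.e. $\mathbf{v}^{*}=(I-\beta P(\mathbf{f}^{*}))^{-1}\mathbf{r}(\mathbf{f}^{*},\mathbf{g}^{*})=\mathbf{v}_{\beta}(\mathbf{f}^{*},\mathbf{g}^{*})$, proving claim (2). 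Dually, whenever $g^{*}(s,a_2)>0$ the inequality $z(s)\le\sum_{a_1}x^{*}(s,a_1)r(s,a_1,a_2)=\xi(s)\,r(s,f^{*}(s),a_2)$ is tight; since $(D)$ maximizes $z$ and $\xi(s)>0$, this forces $g^{*}(s)$ to be supported on the minimizers of $a_2\mapsto r(s,f^{*}(s),a_2)$, and because this objective is linear in $g(s)$ we conclude $r(s,f^{*}(s),g^{*}(s))=\min_{g(s)}r(s,f^{*}(s),g(s))$ in every state.

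Finally, for the saddle-point inequality of claim (3) I would argue each side separately, using throughout that $(I-\beta P(\mathbf{f}))^{-1}\succeq 0$ entrywise. For a deviation $\mathbf{f}$ of player 1 against $\mathbf{g}^{*}$, the constraint of $(P)$ holds for every $a_1$; averaging it against $f(s,a_1)$ gives $\mathbf{v}^{*}\ge\mathbf{r}(\mathbf{f},\mathbf{g}^{*})+\beta P(\mathbf{f})\mathbf{v}^{*}$, and left-multiplying by the nonnegative resolvent yields $\mathbf{v}_{\beta}(\mathbf{f},\mathbf{g}^{*})\le\mathbf{v}^{*}$. For a deviation $\mathbf{g}$ of player 2 against $\mathbf{f}^{*}$, the single-controller assumption is decisive: the chain $P(\mathbf{f}^{*})$ is the same for every $\mathbf{g}$, so $\mathbf{v}_{\beta}(\mathbf{f}^{*},\mathbf{g})-\mathbf{v}_{\beta}(\mathbf{f}^{*},\mathbf{g}^{*})=(I-\beta P(\mathbf{f}^{*}))^{-1}\bigl[\mathbf{r}(\mathbf{f}^{*},\mathbf{g})-\mathbf{r}(\mathbf{f}^{*},\mathbf{g}^{*})\bigr]$, and the minimization property of $\mathbf{g}^{*}$ just established makes the bracketed difference nonnegative componentwise, giving $\mathbf{v}^{*}\le\mathbf{v}_{\beta}(\mathbf{f}^{*},\mathbf{g})$. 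The step I expect to be the crux is exactly this last one: the clean componentwise comparison works only because the transitions are controlled by player 1, so that deviations of player 2 alter the immediate rewards but not the Markov chain; without assumption (SC) the difference would instead couple through two distinct resolvents and the argument would fail.
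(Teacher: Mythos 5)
Your proof is correct, but there is nothing in the paper to compare it against line by line: the paper's entire ``proof'' of this theorem is the citation ``See \cite[pp.~93]{Koos}'' (the result is due to Parthasarathy and Raghavan), so you have supplied an argument the paper deliberately omits. The meaningful comparison is with the paper's proof of its own Main Theorem in Section~\ref{sec:infinite}, the infinite-strategy analogue of this statement, and there your argument matches the paper's strategy almost step for step: strong duality of the primal--dual pair for claim (1); complementary slackness on the player-1 constraints, averaged against $f^{*}$ to produce the Bellman identity $\mathbf{v}^{*}=\mathbf{r}(\mathbf{f}^{*},\mathbf{g}^{*})+\beta P(\mathbf{f}^{*})\mathbf{v}^{*}$, for claim (2); and for claim (3) the same two one-sided deviation arguments --- averaging the primal inequality against an arbitrary $\mathbf{f}$ and applying the entrywise-nonnegative resolvent $(I-\beta P(\mathbf{f}))^{-1}$ for player 1's side, and dual complementary slackness plus the fact that $P(\mathbf{f}^{*})$ is unaffected by $\mathbf{g}$ (assumption SC) for player 2's side. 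Your explicit verification of the dual pairing and of feasibility of $(D)$ via discounted occupation measures is detail that appears nowhere in the paper, and your closing remark correctly isolates the single place where (SC) is indispensable. One small slip worth fixing: in your feasibility argument for $(P)$, an arbitrary $v$ dominating the fixed point of the Bellman operator need not be feasible, since raising $v(s^{\prime})$ for $s^{\prime}\neq s$ raises the right-hand side of constraint $(s,a_{1})$ without raising the left; take $v$ equal to the fixed point itself (or the fixed point plus a nonnegative constant times the all-ones vector, using $\beta<1$) and the argument is airtight.
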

\begin{remark}
Note that statement $2$ claims that the solution of the LP $(P)$
corresponds to the infinite horizon discounted reward obtained when
players $1$ and $2$ play according to the distributions
$\mathbf{f}^{*}$ and $\mathbf{g}^{*}$. Statement $3$ claims that
these distributions are in fact optimal for the two players in the
Nash equilibrium sense.
\end{remark}
\begin{proof}
See \cite[pp. 93]{Koos}.
\end{proof}
\begin{remark}
Note that the primal problem $(P)$ has a natural interpretation in
terms of \emph{security strategies}. Feasible vectors $\mathbf{v}$,
and $\mathbf{g}$ satisfy the first set of inequalities in $(P)$. The
inequalities can be interpreted to mean that using strategy
$\mathbf{g}$ the payoff of player $2$ will be at most $\mathbf{v}$.
\end{remark}

\section{Infinite Strategy Case}
\label{sec:infinite}

\subsection{Problem Setup}
In this section we consider the case where each player can choose from
uncountably many different actions. In particular, each player can
choose actions from the set $[0,1]$. The number of states
$|\mathcal{S}|=S$ is still finite. The payoff function
$r(s,a_{1},a_{2})$ is a polynomial in $a_{1}$ and $a_{2}$ for each
$s\in \mathcal{S}$. The single controller case (Assumption SC) is
studied. In this case, we assume that the probability of transition
$p(s\p;s,a_{1})$ is a polynomial in $a_{1}$. Again we consider the
two-player zero sum case where player $1$ attempts to maximize his
reward over the infinite horizon. We generalize the problem $(P)$ to
this case. The variables $\mathbf{f}$ and $\mathbf{g}$ representing
distributions over the finite sets $A_{1}$ and $A_{2}$ are replaced by
measures $\mu(s)$ and $\nu(s)$. These measures represent mixed
strategies over the uncountable action spaces. (We remind the reader
that for each player there are $S$ measures, each measure
corresponding to a mixed strategy in a particular state. For example
$\mu(s)$ corresponds to the mixed strategy player $1$ would adopt when
the game is in state $s$.)

\subsection{Preliminary Results}
We point out that the generalization of $(P)$ to this case is an
optimization problem involving non-negativity of a system of
univariate polynomials with coefficients that depend on the moments of
these measures. The interpretation in terms of security strategies for
player $2$ holds.  The following is the generalization of the linear
program $(P)$ mentioned above:
\begin{equation*}
\begin{array}{ll}
& \text{minimize       }\sum_{s=1}^{S} v(s) \\
& \nu(s),v(s) \\ \\
(a) &   v(s) \geq \int_{a_{2} \in A_{2}}
r(s,a_{1},a_{2})d\nu(s)+\\
& \text{\indent \indent
\indent}\beta\sum_{s^{\prime}=1}^{S}p(s^{\prime};s,a_{1})v(s^{\prime})
\text{ for all } s\in\mathcal{S}, a_{1}\in A_{1} \\ 
(b) & \nu(s) \text{ is a measure supported on } A_{2} \text{ for all } s\in \mathcal{S}  \\
\end{array}
\end{equation*}
Since $\int r(s,a_{1},a_{2})d\nu(s)=q_{\nu}(s,a_{1})$, a univariate
polynomial in $a_{1}$ for each $s\in \mathcal{S}$, for a fixed
vector $v({s})$, the constraints (a) are a system of polynomial
inequalities. Note that the coefficients of $q$ will depend on the
measure $\nu$ only via finitely many moments. More concretely, let
$r(s,a_{1},a_{2})=\sum_{i,j}^{n_{s},m_{s}}
r_{ij}(s)a_{1}^{i}a_{2}^{j}$ be the payoff polynomial. Then $\int
r(s,a_{1},a_{2})d\nu(s)=\sum_{i,j} r_{ij}(s)a_{1}^{i}\nu_{j}(s)$.
Using this observation, this problem may be rewritten as the
following problem.
\begin{equation}
\begin{array}{ll}
& \text{minimize     } \sum_{s=1}^{S} v(s) \\
& \bar{\nu}(s),v(s) \\ \\
(c) & \text{\indent} v(s)- \sum_{i,j} r_{ij}(s)a_{1}^{i}\nu_{j}(s)-\\ \\
& \text{\indent}
\beta\sum_{s^{\prime}=1}^{S}p(s^{\prime};s,a_{1})v(s^{\prime}) \in
\mc{P}(A_{1}) \text{ for all } s\in\mathcal{S} \\ \\
(d) & \text{\indent}\bar{\nu}(s) \in \mathcal{M}(A_{2}), \text{ and } \nu_{0}(s)=1  \text{ for all } s \in \mathcal{S}. \\
\end{array}
\tag{$P \p$}
\end{equation}
The constraints (c) give a system of polynomial inequalities in
$a_{1}$, one inequality per state. Fix some state $s$. Let the
degree of the inequality for that state by $d_{s}$. Let $
[a_{1}]_{d_{s}}=[1, a_{1}, a_{1}^{2}, \ldots a_{1}^{d_{s}}]$. The
first term in constraint (c) can be rewritten in vector form as:
$$
\sum_{i,j}
r_{ij}(s)a_{1}^{i}\nu_{j}(s)=\bar{\nu}(s)^{T}R(s)^{T}[a_{1}]_{d_{s}},
$$
where $R(s)$ is a matrix that contains the coefficients of the
polynomial $r(s,a_{1},a_{2})$. Similar to the finite strategy case
we define a vector by
$\mathbf{v}^{*}=[v^{*}(1),\ldots,v^{*}(S)]^{T}$ which will turn out
to be the value vector of the stochastic game (which is indexed by
the state). The second term in the constraint (c) which depends on
the probability transition $p(s\p;s,a_{1})$ is also a polynomial in
$a_{1}$ whose coefficients depend on the coefficients of
$p(s\p;s,a_{1})$ and $\mathbf{v}$. Specifically
$$
\sum_{s^{\prime}=1}^{S}p(s^{\prime};s,a_{1})v(s^{\prime})=\mathbf{v}^{T}Q(s)^{T}[a_{1}]_{d_{s}},
$$
for some matrix $Q(s)$ which contains the coefficients of
$p(s\p;s,a_{1})$.
\begin{lemma} 
Let $A_{1}=A_{2}=[0,1]$. Let $E_{s}\in\mathbb{R}^{d_{s}\times S}$ be
the matrix which has a $1$ in the $(1,s)$ position. Then the
semidefinite program $(SP)$ given by:
\begin{equation}
\begin{array}{ll}
& \text{{minimize}    } \sum_{s=1}^{S} v(s) \\
& \bar{\nu}(s),v(s) \\ \\
(e)&
\mc{H}^{*}(Z_{s}+\frac{1}{2}(L_{1}W_{s}L_{2}^{T}+L_{2}W_{s}L_{1}^{T})-L_{2}W_{s}L_{2}^{T})\\
& =E_{s}\mathbf{v}-\beta Q(s) \mathbf{v} -R(s)\bar{\nu}(s) \text{ \
\ \ }\forall s \in \mc{S}\\ \\
(f)& \mc{H}(\bar{\nu}(s)) \succeq 0 \text{ \ \ \ }\forall s \in
\mc{S}\\ \\
(g)& \frac{1}{2}\left( L_{1}\t\mh L_{2} +L_{2}^{T} \mh L_{1}\right)\\
&-L_{2}\t \mh L_{2} \succeq 0 \text{ \ \ \ }\forall s \in \mc{S}\\
\\
(h)& e_{1} \t \bar{\nu}(s)=1  \text{ \ \ \ }\forall s \in
\mc{S}\\ \\
(i)& Z_{s}, W_{s} \succeq 0  \text{ \ \ \ }\forall s \in \mc{S}\\ \\
\end{array}
\tag{$SP$}
\end{equation}
exactly solves the polynomial optimization problem $(P \p)$.
\end{lemma}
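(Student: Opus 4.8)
The plan is to verify that $(SP)$ is nothing more than $(P\p)$ with each of its two constraints replaced by the exact semidefinite characterizations established in Lemmas~\ref{lem:sos01} and~\ref{lem:mom01}. Since both problems carry the identical objective $\sum_{s=1}^{S}v(s)$ and share the decision variables $\bar{\nu}(s)$ and $\mathbf{v}$ (the matrices $Z_s,W_s$ appearing in $(SP)$ being purely auxiliary), it suffices to show that, for each fixed $s\in\mc{S}$, the feasible set of $(c)$--$(d)$ in the variables $(\bar{\nu}(s),\mathbf{v})$ coincides with the projection onto those variables of the feasible set of $(e)$--$(i)$. Establishing this constraint-by-constraint equivalence immediately yields equality of optimal values and a correspondence between optimal solutions.

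First I would address constraint $(c)$. Using the two reformulations already derived in the text, namely $\sum_{i,j}r_{ij}(s)a_1^{i}\nu_j(s)=\bar{\nu}(s)\t R(s)\t[a_1]_{d_s}$ and $\sum_{s\p}p(s\p;s,a_1)v(s\p)=\mathbf{v}\t Q(s)\t[a_1]_{d_s}$, the polynomial appearing in $(c)$ has coefficient vector exactly $E_s\mathbf{v}-\beta Q(s)\mathbf{v}-R(s)\bar{\nu}(s)$, where $E_s$ records the constant term $v(s)$. By Lemma~\ref{lem:sos01}, this polynomial lies in $\mc{P}([0,1])$ if and only if there exist $Z_s\succeq 0$ and $W_s\succeq 0$ whose image $\mc{H}^{*}(Z_s+\tfrac12(L_1W_sL_2\t+L_2W_sL_1\t)-L_2W_sL_2\t)$ equals that coefficient vector. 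This is precisely the conjunction of $(e)$ and $(i)$: the ``only if'' direction supplies the auxiliary $Z_s,W_s$, while the ``if'' direction recovers nonnegativity on $[0,1]$ and hence feasibility in $(c)$.

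Next I would treat constraint $(d)$, which is a direct transcription. By Lemma~\ref{lem:mom01}, the vector $\bar{\nu}(s)$ is a valid moment sequence of a nonnegative measure supported on $[0,1]$ exactly when the two linear-matrix-inequality conditions of~\eqref{eq:mom01} hold; these are literally $(f)$ and $(g)$. The probability-measure requirement $\nu_0(s)=1$ is the scalar equation $e_1\t\bar{\nu}(s)=1$, i.e.\ constraint $(h)$. Hence $(d)$ and $(f)$--$(h)$ describe the same set of admissible moment vectors, and combining this with the previous paragraph shows that the $(\bar{\nu},\mathbf{v})$-feasible regions of $(P\p)$ and $(SP)$ agree.

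The steps above are essentially bookkeeping, so the one point demanding care --- and the natural place for an error to creep in --- is the matching of coefficient vectors and matrix dimensions in constraint $(c)$. One must fix the degree $d_s$ of the polynomial consistently (padding with zero coefficients where necessary) so that $\mc{H}^{*}$, together with $L_1$ and $L_2$, acts on objects of the correct size, and confirm that $R(s)$ and $Q(s)$ are laid out so that $R(s)\bar{\nu}(s)$ and $\beta Q(s)\mathbf{v}$ are indeed the coefficient vectors of the two subtracted polynomials in the monomial basis $[a_1]_{d_s}$. Once these dimensional conventions are pinned down, the two equivalences give $(SP)$ and $(P\p)$ identical feasible regions in $(\bar{\nu},\mathbf{v})$ and identical objectives, which is exactly the assertion that $(SP)$ solves $(P\p)$.
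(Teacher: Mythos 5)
Your proposal is correct and follows essentially the same route as the paper's own proof: identify the coefficient vector of the polynomial in constraint (c) as $E_{s}\mathbf{v}-\beta Q(s)\mathbf{v}-R(s)\bar{\nu}(s)$, then invoke Lemma~\ref{lem:sos01} for the nonnegativity constraint (yielding (e) and (i)) and Lemma~\ref{lem:mom01} for the moment constraint (yielding (f)--(h)). The paper states this in two sentences; your version merely spells out the same constraint-by-constraint equivalence in more detail.
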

\begin{proof}
The polynomial in inequality (c) has the coefficient vector
$E_{s}\mathbf{v}-\beta Q(s) \mathbf{v} -R(s)\bar{\nu}(s)$. The proof
follows as a direct consequence of Lemma~\ref{lem:sos01} concerning
the semidefinite representation of polynomials nonnegative over
$[0,1]$, and Lemma~\ref{lem:mom01} concerning the semidefinite
representation of moment sequences of nonnegative measures supported
on $[0,1]$.
\end{proof}

The dual of $(SP)$ is given by the following semidefinite program:
\begin{equation}
\begin{array}{ll}
& \text{maximize    } \sum_{s=1}^{S} \alpha(s) \\
& \alpha(s),\bar{\xi}(s) \\ \\
(j)&
\mc{H}^{*}(A_{s}+\frac{1}{2}(L_{1}B_{s}L_{2}^{T}+L_{2}B_{s}L_{1}^{T})-L_{2}B_{s}L_{2}^{T})=\\
& \text{\indent \indent \indent} R_{s}^{T}\xib(s)- \alpha(s)e_{1}
\text{ \ \ \ }\forall s \in
\mc{S}\\ \\
(k)& \mc{H}(\xib(s)) \succeq 0 \text{ \ \ \ }\forall s \in \mc{S}\\ \\
(l)& \frac{1}{2}\left( L_{1}\t\mxi L_{2} +L_{2}^{T} \mxi
L_{1}\right)- \\
& \text{\indent \indent \indent} L_{2}\t \mxi L_{2} \succeq 0 \text{ \ \ \ }\forall s \in \mc{S}\\
\\
& \sum_{s} (E_{s}- \beta Q(s))\t \xib(s)=1 \\ \\
(m)& A_{s}, B_{s} \succeq 0  \text{ \ \ \ }\forall s \in \mc{S}.\\ \\

\end{array}
\tag{$SD$}
\end{equation}

\begin{lemma}
The dual SDP $(SD)$ is equivalent to the following polynomial
optimization problem:
\begin{equation}
\begin{array}{ll}
& \text{{maximize}    } \sum_{s=1}^{S} \alpha(s) \\
& \alpha(s),\bar{\xi}(s) \\ \\
(n) & \sum_{i,j}r_{ij}(s)\xi_{i}(s)a_{2}^{j}- \alpha(s) \geq 0
\text{
\indent }\forall a_{2} \in A_{2}, s \in \mc{S}\\ \\
(o) &  \xib(s) \in \mc{M}(A_{2}) \text{ \ \ \ }\forall s \in \mc{S}\\
\\
(p) & \sum_{s} \int_{A_{1}} (\delta(s, s \p) - \beta p(s \p, s,
a_{1}))
d\xi (s) =1 \text{ \ }\forall s \p \in \mc{S}.\\ \\
\end{array}
\tag{$D \p$}
\end{equation}
\end{lemma}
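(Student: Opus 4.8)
The plan is to prove the equivalence constraint-by-constraint, reapplying the semidefinite dictionary of Lemmas~\ref{lem:sos01} and~\ref{lem:mom01} to the dual variables exactly as the previous lemma did for the primal. Since the objective $\sum_{s=1}^{S}\alpha(s)$ is literally identical in $(SD)$ and $(D\p)$, it suffices to show that the two feasible sets coincide. I would begin with the block $(j)$ together with the semidefiniteness constraint $(m)$. By the definition of the coefficient matrix $R(s)$, the right-hand side $R_{s}\t\xib(s)-\alpha(s)e_{1}$ is precisely the coefficient vector, in the variable $a_{2}$, of the univariate polynomial $\sum_{i,j}r_{ij}(s)\xi_{i}(s)a_{2}^{j}-\alpha(s)$ (here the moments $\xi_{i}(s)$ arise from integrating the $a_{1}$-monomials of $r(s,a_{1},a_{2})$ against $\xi(s)$). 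Hence Lemma~\ref{lem:sos01} applies verbatim: the existence of matrices $A_{s},B_{s}\succeq 0$ satisfying $(j)$ is equivalent to the nonnegativity of that polynomial on $[0,1]$, which is exactly constraint $(n)$.

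Next I would observe that constraints $(k)$ and $(l)$ are, word for word, the two conditions of Lemma~\ref{lem:mom01} applied to the vector $\xib(s)$. Consequently they hold if and only if $\xib(s)$ is a valid moment sequence of a nonnegative measure supported on $A_{2}=[0,1]$, i.e.\ $\xib(s)\in\mc{M}(A_{2})$, which is constraint $(o)$. Note that, in contrast to the primal problem $(SP)$, no normalization $\xi_{0}(s)=1$ is imposed on the dual moment vectors, which is consistent with $(o)$ demanding only membership in $\mc{M}(A_{2})$.

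It then remains to verify that the single linear equality $\sum_{s}(E_{s}-\beta Q(s))\t\xib(s)=1$ reproduces constraint $(p)$. Reading this equation componentwise in $s\p$, the claim is that $\big((E_{s}-\beta Q(s))\t\xib(s)\big)_{s\p}=\int_{A_{1}}(\delta(s,s\p)-\beta p(s\p;s,a_{1}))\,d\xi(s)$. The $E_{s}$ term should contribute $\delta(s,s\p)\,\xi_{0}(s)=\delta(s,s\p)\int_{A_{1}}d\xi(s)$, since $E_{s}$ merely deposits the zeroth moment into coordinate $s$ and thereby implements the Kronecker delta; the $Q(s)$ term should contribute $\beta\int_{A_{1}}p(s\p;s,a_{1})\,d\xi(s)$, because the rows of $Q(s)$ are by construction the $a_{1}$-coefficients of the transition polynomial $p(s\p;s,a_{1})$, so that pairing $Q(s)\t$ with the moment vector $\xib(s)$ integrates $p$ against $\xi(s)$ term by term. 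Summing over $s$ then yields $(p)$, completing the identification of the feasible sets.

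The routine parts, namely the sum-of-squares and moment correspondences, are immediate from the cited lemmas. The main obstacle is the index bookkeeping in this last step: one must track the conventions carefully to confirm that $E_{s}$ genuinely encodes $\delta(s,s\p)$ through the zeroth moment placed in coordinate $s$, and that the transpose pairing $Q(s)\t\xib(s)$ reconstructs $\int_{A_{1}}p(s\p;s,a_{1})\,d\xi(s)$ rather than some rearrangement of the coefficients. Once these identifications are pinned down, the feasible regions of $(SD)$ and $(D\p)$ agree, and since the objectives are identical the equivalence follows.
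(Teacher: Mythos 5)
Your proposal is correct and follows essentially the same route as the paper, whose proof is simply the one-line citation of Lemmas~\ref{lem:sos01} and~\ref{lem:mom01}; you have merely filled in the details: (j)+(m) $\leftrightarrow$ (n) via the nonnegativity certificate, (k)+(l) $\leftrightarrow$ (o) via the moment characterization, and the linear equality $\leftrightarrow$ (p) by unpacking $E_{s}$ and $Q(s)$. The only nitpick is a transposition slip (it is the columns of $Q(s)$, i.e.\ the rows of $Q(s)\t$, that hold the coefficients of $p(s\p;s,a_{1})$), which does not affect the argument since your pairing $Q(s)\t\xib(s)$ is computed correctly.
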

\begin{proof}
This again follows as a consequence of Lemmas~\ref{lem:sos01} and~\ref{lem:mom01}.
\end{proof}
\begin{remark}
 Note that in
the dual problem, the moment sequences do not necessarily correspond
to probability measures. Hence, to convert them to probability
measures, one needs to normalize the measure. Upon normalization,
one obtains the optimal strategy for player $1$.
\end{remark}

\begin{lemma}
The polynomial optimization problems $(P \p)$ and $(D \p)$ are
strong duals of each other.
\end{lemma}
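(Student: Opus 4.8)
The plan is to reduce the claimed strong duality to the strong duality of the semidefinite pair $(SP)$ and $(SD)$, which has already been set up. By the two preceding lemmas, $(P')$ is equivalent to $(SP)$ and $(D')$ is equivalent to $(SD)$, and $(SD)$ was constructed as the conic (Lagrangian) dual of $(SP)$. Consequently weak duality holds automatically: every feasible point of $(D')$ yields a value $\sum_s \alpha(s)$ no larger than the value $\sum_s v(s)$ of any feasible point of $(P')$. It therefore remains only to exhibit a constraint qualification guaranteeing a zero duality gap, and the natural choice is Slater's condition (strict feasibility) for $(SP)$.

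To verify Slater's condition I would construct a strictly feasible point of $(SP)$, i.e.\ one for which the moment matrix $\mc{H}(\bar\nu(s))$, the associated localizing matrix, and the slack matrices $Z_s, W_s$ are all strictly positive definite. For the measure variables, take each $\nu(s)$ to be Lebesgue measure on $[0,1]$ (which already has unit mass, so $(h)$ holds). Its moment matrix is the Hilbert matrix and the localizing matrix of Lemma~\ref{lem:mom01} is likewise positive definite, since the measure has full support in the interior of $[0,1]$; thus constraints $(f)$ and $(g)$ hold strictly. With $\bar\nu(s)$ fixed, the quantity $q_{\nu}(s,a_1) = \int r(s,a_1,a_2)\, d\nu(s)$ is a fixed univariate polynomial in $a_1$.

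The remaining task is to choose $\mathbf{v}$ so that the polynomial appearing on the left-hand side of $(c)$ is \emph{strictly} positive on $[0,1]$; by Lemma~\ref{lem:sos01} such a strictly positive polynomial admits a representation with $Z_s \succ 0$ and $W_s \succ 0$, giving strict feasibility of $(e)$ and $(i)$. Here one uses the structure of the transition probabilities: since $p(\cdot\,;s,a_1)$ is a probability distribution over next states, $\sum_{s'} p(s';s,a_1) = 1$, so taking $v(s) = M$ for every $s$ makes that polynomial equal to $M(1-\beta) - q_{\nu}(s,a_1)$, which for $M$ large enough is bounded below by a positive constant uniformly in $a_1 \in [0,1]$ and $s \in \mc{S}$. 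This produces a strictly feasible point of $(SP)$.

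Finally, the optimal value is finite: the existence of the (unique) value vector established earlier, or more directly weak duality against any single feasible point of $(SD)$, bounds $\sum_s v(s)$ from below. With $(SP)$ strictly feasible and bounded, the standard Slater strong-duality theorem for semidefinite programming yields a zero duality gap together with attainment of the dual optimum; transporting this conclusion back through the equivalences $(P')\equiv(SP)$ and $(D')\equiv(SD)$ gives strong duality between $(P')$ and $(D')$. I expect the main obstacle to be exactly the strict-feasibility verification — in particular, checking that a single choice of $\mathbf{v}$ renders all $S$ coupled polynomial inequalities strictly positive simultaneously, which is precisely what the identity $\sum_{s'} p(s';s,a_1) = 1$ together with $\beta < 1$ is there to handle.
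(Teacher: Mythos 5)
Your proposal is correct and follows the same overall route as the paper: pass to the equivalent SDP pair $(SP)$--$(SD)$, verify Slater's condition using uniform (Lebesgue) measures for the $\nu(s)$, check boundedness, and invoke SDP strong duality. The one substantive difference is how strict feasibility of constraints (c)/(e)/(i) is certified, and here your version is actually \emph{more} careful than the paper's. The paper computes $\mathbf{v}_{\beta}(\mathbf{\mu},\mathbf{\nu})=[I-\beta P(\mu)]^{-1}\mathbf{r}(\mu,\nu)$ for uniform strategies and asserts that any $\mathbf{v}>\mathbf{v}_{\beta}$ makes the polynomials in (c) strictly positive; as literally stated this is insufficient, since the Bellman-type identity defining $\mathbf{v}_\beta$ only controls the \emph{integral} of $q_{\nu}(s,a_1)+\beta\sum_{s\p}p(s\p;s,a_1)v_\beta(s\p)$ against $\mu(s)$, not its pointwise values in $a_1$ (e.g., with one state, $\beta=0.9$ and $r=a_1$, one has $v_\beta=5$ while (c) requires $v\geq 10$, so $v=5.1>v_\beta$ is infeasible). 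Your choice $v(s)=M$ for all $s$, combined with row-stochasticity $\sum_{s\p}p(s\p;s,a_1)=1$ to reduce (c) to $M(1-\beta)-q_{\nu}(s,a_1)>0$ for $M$ large, closes exactly this gap and handles all $S$ inequalities uniformly. Conversely, your boundedness argument is the weaker half: appealing to the earlier existence theorem or to "any feasible point of $(SD)$" (which you never exhibit) is heavier and less self-contained than the paper's direct observation that $\inf_{a_1,a_2} r(s,a_1,a_2)$ is finite on the compact strategy space, which via (c) yields $\min_s v(s)\geq \inf r/(1-\beta)$; you should substitute that one-line bound for your appeal.
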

\begin{proof}
We prove this by showing that the semidefinite program $(SP)$
satisfies Slater's constraint qualification and that it is bounded
from below. The result then follows from the strong duality of the
equivalent semidefinite programs $(SP)$ and
$(SD)$.\\
First pick $\mu(s)$ and $\nu(s)$ to be the uniform distribution on
$[0,1]$ for each state $s\in \mc{S}$. One can show \cite{Karlin}
that the moment sequence of $\mu$ is in the interior of the moment
space of $[0,1]$. As a consequence, constraints (f) and (g) are
strictly positive definite. Using the strategies $\mathbf{\mu}$ and
$\mathbf{\nu}$, evaluate the discounted value of this pair of
strategies as:
$$
\mathbf{v}_{\beta}(\mathbf{\mu},\mathbf{\nu})=[I-\beta
P(\mu)]^{-1}\mathbf{r}(\mu,\nu).
$$
Choose $\mathbf{v}>\mathbf{v}_{\beta}$. The polynomial inequalities
given by (c) are all strictly positive and thus constraints (i) are
strictly positive definite. The
equality constraints are trivially satisfied. \\
To prove that the problem is bounded below, we note that
$r(s,a_{1},a_{2})$ is a polynomial and that the strategy spaces for
both players are bounded. Hence,
$$\inf_{a_{1}\in A_{1}, a_{2}\in A_{2}} r(s,a_{1},a_{2})$$
is finite and provides a trivial lower bound for $v(s)$.
\end{proof}
\begin{lemma}
\label{lem:cs}
Let $\bar{\nu}^{*}(s)$ and $\bar{\xi}^{*}(s)$ be optimal moment
sequences for $(P \p)$ and $ (D \p)$ respectively. Let $\nu^{*}(s)$
and $\xi^{*}(s)$ be the corresponding measures supported on $A_{1}$
and $A_{2}$ respectively. The following complementary slackness
results hold for the optima of $(P \p)$ and $ (D \p)$:
\begin{equation}
\begin{array}{l}
v^{*}(s)\int_{A_{1}}d\xi^{*}(s)=\int_{A_{2}} \int_{A_{1}}
r(s,a_{1},a_{2})d\xi^{*}(s)d\nu^{*}(s) +\\
\text{\indent \indent \indent}\beta \sum_{s \p}
v^{*}(s\p)\int_{A_{1}} p(s \p; s,
a_{1})d\xi^{*}(s) \text{ \ \ \ }\forall s \in \mc{S}\\
\end{array} 
\label{eq:cs1}
\end{equation}

\begin{equation} \begin{array}{l}
\alpha^{*}(s) \int_{A_{2}}d\nu^{*}(s)=\int_{A_{2}} \int_{A_{1}}
r(s,a_{1}, a_{2}) d\xi^{*}(s)d\nu^{*}(s) \\
 \text{ \indent \indent \indent \indent \indent \indent \indent \indent \indent \indent
 \indent \indent \indent \indent \indent \indent \indent \indent }\forall s \in \mc{S}.
\end{array}
\label{eq:cs2}
\end{equation}
\end{lemma}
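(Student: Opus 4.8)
The plan is to read \eqref{eq:cs1} and \eqref{eq:cs2} off as the complementary slackness conditions of the strongly dual pair $(P\p)$, $(D\p)$, but to derive them directly at the level of the polynomial programs rather than tracking the matrix variables $Z_s,W_s,A_s,B_s$ through $(SP)$ and $(SD)$. For each state $s$ I would introduce the two quantities
$$I_1(s) = \int_{A_1}\Big[\,v^{*}(s) - \sum_{i,j} r_{ij}(s) a_{1}^{i}\nu_{j}^{*}(s) - \beta \sum_{s\p} p(s\p;s,a_{1}) v^{*}(s\p)\,\Big]\,d\xi^{*}(s),$$
$$I_2(s) = \int_{A_2}\Big[\,\sum_{i,j} r_{ij}(s)\xi_{i}^{*}(s) a_{2}^{j} - \alpha^{*}(s)\,\Big]\,d\nu^{*}(s).$$
The integrand of $I_1(s)$ is exactly the polynomial appearing in the primal constraint (c), which is nonnegative on $A_1$ by primal feasibility, while $\xi^{*}(s)$ is a nonnegative measure on $A_1$ by dual feasibility (o); hence $I_1(s)\geq 0$. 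Symmetrically, the integrand of $I_2(s)$ is the polynomial in the dual constraint (n), nonnegative on $A_2$, and $\nu^{*}(s)$ is a nonnegative (indeed probability) measure by primal feasibility (d), so $I_2(s)\geq 0$. Expanding each integral and using $\nu_{j}^{*}(s)=\int_{A_2} a_{2}^{j}\,d\nu^{*}(s)$ and $\xi_{i}^{*}(s)=\int_{A_1} a_{1}^{i}\,d\xi^{*}(s)$, one checks that $I_1(s)=0$ is precisely \eqref{eq:cs1} and $I_2(s)=0$ is precisely \eqref{eq:cs2}; so it suffices to prove that every $I_1(s)$ and $I_2(s)$ vanishes.

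Next I would sum over $s$ and show the total is zero. In $\sum_s I_1(s)$, the terms $\sum_s v^{*}(s)\int_{A_1}d\xi^{*}(s)$ and $-\beta\sum_s\sum_{s\p} v^{*}(s\p)\int_{A_1} p(s\p;s,a_{1})\,d\xi^{*}(s)$ are reorganized by swapping the order of summation and factoring out $v^{*}(s\p)$; the bracketed coefficient of each $v^{*}(s\p)$ is then exactly the left-hand side of the dual normalization constraint (p), which equals $1$. This collapses the transition term and gives $\sum_s I_1(s)=\sum_s v^{*}(s)-\sum_s \int_{A_2}\int_{A_1} r(s,a_1,a_2)\,d\xi^{*}(s)\,d\nu^{*}(s)$. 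Summing $I_2(s)$ and using the primal normalization $\int_{A_2} d\nu^{*}(s)=1$ from (d) gives $\sum_s I_2(s)=\sum_s \int_{A_2}\int_{A_1} r(s,a_1,a_2)\,d\xi^{*}(s)\,d\nu^{*}(s)-\sum_s \alpha^{*}(s)$. Adding the two, the reward double integrals appear with opposite signs and cancel (the two orderings agree by Fubini), leaving $\sum_s I_1(s)+\sum_s I_2(s)=\sum_s v^{*}(s)-\sum_s \alpha^{*}(s)$, which is zero by the strong duality of $(P\p)$ and $(D\p)$ proved in the preceding lemma.

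Finally, since $\sum_s I_1(s)+\sum_s I_2(s)=0$ with every summand nonnegative, each $I_1(s)$ and each $I_2(s)$ must vanish individually, which is the assertion of the lemma. The one step that requires care is the index manipulation in $\sum_s I_1(s)$: one must relabel the double sum over states so that the coefficient of each $v^{*}(s\p)$ matches the normalization constraint (p) verbatim, including the $\delta(s,s\p)$ term that produces the $\int_{A_1} d\xi^{*}(s\p)$ contribution. Everything else is routine moment bookkeeping together with the application of Fubini's theorem to identify the two orderings of the reward double integral.
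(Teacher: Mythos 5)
Your proposal is correct, and it proves the lemma by a route that differs in execution from the paper's, even though both rest on the same pillar: the strong duality $p^{*}=d^{*}$ established in the lemma immediately preceding this one. The paper's proof is a terse Lagrangian argument: it forms the Lagrangian of $(P\p)$, using the dual measure $\xi$ as the multiplier for constraint (c) and $\alpha(s)$ for the normalization $\nu_{0}(s)=1$, observes that weak duality must hold with equality because of strong duality, and asserts that this forces \eqref{eq:cs1}; then \eqref{eq:cs2} is obtained by the symmetric argument on the Lagrangian of $(D\p)$. You instead bypass the Lagrangian formalism entirely: you define the slack integrals $I_{1}(s)$ and $I_{2}(s)$, each nonnegative because a polynomial that is feasible (hence nonnegative on $[0,1]$) is integrated against a nonnegative representing measure, and you verify by direct bookkeeping --- re-indexing the double sum so that the coefficient of each $v^{*}(s\p)$ is exactly the left-hand side of the dual normalization (p), and using $\nu_{0}^{*}(s)=1$ from (d) --- that
\begin{equation*}
\sum_{s}\bigl(I_{1}(s)+I_{2}(s)\bigr)=\sum_{s}v^{*}(s)-\sum_{s}\alpha^{*}(s)=p^{*}-d^{*}=0,
\end{equation*}
whence every slack vanishes, and $I_{1}(s)=0$, $I_{2}(s)=0$ are precisely \eqref{eq:cs1} and \eqref{eq:cs2}. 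What your route buys is transparency and unity: both complementary slackness relations fall out of a single identity, the precise role of the normalization constraints (p) and (d) is made explicit, and nothing is left to the reader (the paper's ``this forces the relation'' hides exactly the computation you carry out). It also works directly at the level of the polynomial programs, which matches the lemma's hypothesis that representing measures $\nu^{*}(s),\xi^{*}(s)$ for the optimal moment sequences exist; that hypothesis is what legitimizes the nonnegativity claims $I_{1}(s)\geq 0$, $I_{2}(s)\geq 0$. What the paper's route buys is brevity and the fact that the Lagrangian formulation transfers verbatim to the SDP variables $(Z_{s},W_{s},A_{s},B_{s})$ of $(SP)$--$(SD)$ without ever invoking representing measures. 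I see no gap in your argument; the Fubini step is trivial here since $r$ is a polynomial and the measures are finite with compact support.
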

\begin{proof}
The result follows from the strong duality of the equivalent
semidefinite representations of the primal-dual pair $(P \p)-(D\p)$.
The Lagrangian function for $(P \p)$ is given by:
\begin{equation*}
\begin{array}{rl}
\mc{L}(\xi,\alpha)=&\inf_{\mathbf{v}, \nu} \{
\sum_{s=1}^{S}v(s)-\int_{A_{1}} [ v(s) -  \int_{A_{2}}
r(s,a_{1},a_{2})d\nu(s)  \\
& - \beta \sum_{s\p} v(s\p) p(s \p; s, a_{1}) ]d\xi(s)
+\sum_{s}\alpha(s)(1-\nu_{0}(s))\}.
\end{array}
\end{equation*}
$\mc{L}(\xi,\alpha)$ must satisfy weak duality, i.e. $d^{*}\leq
p^{*}$. At optimality $p^{*}=\sum_{s}v^{*}(s)$ for some vector
$\mathbf{v}^{*}$. However, strong duality holds, i.e. $p^{*}=d^{*}$.
This forces the first complementary slackness relation. The second
relation is obtained similarly by considering the Lagrangian of the
dual problem.
\end{proof}

We have shown that problem $(P \p)$ can be reduced to the
semidefinite program $(SP)$, and is thus computationally tractable
via convex optimization algorithms. We next show that the solution
to problem $(P \p)$ is in fact the desired equilibrium solution.

\subsection{Main Theorem}
Let $p^{*}$ be the optimal value of $(P \p)$, and $d^{*}$ be the
optimal value of $(D \p)$. Let $\nu^{*}(s)$ and $\xi^{*}(s)$ be the
optimal measures recovered in $(P \p)$ and $(D \p)$. Let
$$ \mu^{*}(s)=\frac{\xi^{*}(s)}{\int_{A_{1}}d \xi^{*}(s)}.
$$ so that $\mu^{*}$ is a normalized version of $\xi^{*}$ (i.e. $\mu^{*}$ is a probability measure).
Let $\mathbf{v}^{*}$ be the vector obtained as the optimal solution
of $(P \p)$.
\begin{theorem}
The optimal solutions to the primal-dual pair $(P \p)$, $(D \p)$
satisfy the following:
\begin{enumerate}
\item $p^{*}=d^{*}$.
\item $\mathbf{v}^{*}=v_{\beta}(\mathbf{\mu}^{*},\mathbf{\nu}^{*}).$
\item $\mathbf{v}_{\beta}(\mathbf{\mu}^{*},\mathbf{\nu}^{*})$ satisfies the
saddle-point inequality:
\begin{equation}
\mathbf{v}_{\beta}(\mathbf{\mu},\mathbf{\nu}^{*}) \leq
\mathbf{v}_{\beta}(\mathbf{\mu}^{*},\mathbf{\nu}^{*}) \leq
\mathbf{v}_{\beta}(\mathbf{\mu}^{*},\mathbf{\nu})
\end{equation}
for all mixed strategies $\mathbf{\mu}, \mathbf{\nu}$.
\end{enumerate}
\end{theorem}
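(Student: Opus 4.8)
The plan is to handle the three claims in order, leaning on the strong duality and complementary slackness already established in Lemma~\ref{lem:cs}, together with the single-controller structure, which decouples the transition matrix from player $2$'s strategy. Claim $1$ is immediate: the preceding lemma shows that the semidefinite reformulation $(SP)$ satisfies Slater's condition and is bounded below, so $(SP)$ and its dual $(SD)$ attain a common optimal value; since $(P')$ and $(D')$ are equivalent to $(SP)$ and $(SD)$, we conclude $p^* = d^*$.

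For Claim $2$, I would start from the first complementary slackness relation \eqref{eq:cs1}. Writing $c_s := \int_{A_1} d\xi^*(s)$ for the total mass of $\xi^*(s)$, dual feasibility constraint $(p)$ gives $c_{s'} = 1 + \beta \sum_s \int_{A_1} p(s';s,a_1)\,d\xi^*(s) \geq 1$, so each $c_s > 0$ and $\mu^*(s) = \xi^*(s)/c_s$ is a genuine probability measure. Dividing \eqref{eq:cs1} by $c_s$ and recognizing $\int_{A_1} p(s';s,a_1)\,d\mu^*(s) = P_{ss'}(\mu^*)$ yields, state by state,
$$
v^*(s) = r(s,\mu^*(s),\nu^*(s)) + \beta \sum_{s'} P_{ss'}(\mu^*)\, v^*(s'),
$$
i.e.\ $(I - \beta P(\mu^*))\mathbf{v}^* = \mathbf{r}(\mu^*,\nu^*)$. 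Because $P(\mu^*)$ is stochastic and $0 \le \beta < 1$, the matrix $I - \beta P(\mu^*)$ is invertible with entrywise-nonnegative inverse $\sum_{k\ge 0}\beta^k P(\mu^*)^k$, so $\mathbf{v}^* = v_\beta(\mu^*,\nu^*)$.

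Claim $3$ splits into two inequalities. For the left one I would use primal feasibility: constraint $(a)$ holds for \emph{every} $a_1 \in A_1$, so integrating it against an arbitrary probability measure $\mu(s)$ gives $\mathbf{v}^* \ge \mathbf{r}(\mu,\nu^*) + \beta P(\mu)\mathbf{v}^*$, and multiplying by the nonnegative resolvent $(I-\beta P(\mu))^{-1}$ yields $\mathbf{v}^* \ge v_\beta(\mu,\nu^*)$. The right inequality is the crux, and is where the single-controller assumption is indispensable. I would combine dual feasibility $(n)$, which reads $\int_{A_1} r(s,a_1,a_2)\,d\xi^*(s) \ge \alpha^*(s)$ for all $a_2$, with the second complementary slackness relation \eqref{eq:cs2}, which (using $\nu_0^*(s)=1$) identifies $\alpha^*(s) = c_s\, r(s,\mu^*(s),\nu^*(s))$. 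Dividing by $c_s$ and integrating against an arbitrary $\nu(s)$ produces the pointwise reward inequality $r(s,\mu^*(s),\nu(s)) \ge r(s,\mu^*(s),\nu^*(s))$ for all $s$. Now the single-controller property enters: since $P(\mu^*)$ does not depend on player $2$'s strategy, $v_\beta(\mu^*,\nu) = (I-\beta P(\mu^*))^{-1}\mathbf{r}(\mu^*,\nu)$ with the \emph{same} resolvent for every $\nu$, and applying this nonnegative operator to $\mathbf{r}(\mu^*,\nu) \ge \mathbf{r}(\mu^*,\nu^*)$ gives $v_\beta(\mu^*,\nu) \ge v_\beta(\mu^*,\nu^*) = \mathbf{v}^*$.

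I expect the main obstacle to be the right-hand saddle inequality: one must correctly extract the value of $\alpha^*(s)$ from complementary slackness and then exploit that, unlike in a general stochastic game, player $2$'s deviation leaves the transition matrix fixed, so the scalar reward inequality can be propagated through a single fixed resolvent. The positivity $c_s > 0$, needed to normalize $\xi^*$ into the probability measure $\mu^*$, is a small but essential technical point that I would verify first from constraint $(p)$.
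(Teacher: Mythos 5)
Your proposal is correct and follows essentially the same route as the paper's proof: strong duality of $(SP)$--$(SD)$ for claim 1; the normalized complementary slackness relation \eqref{eq:cs1} plus the resolvent $(I-\beta P(\mu^{*}))^{-1}$ for claim 2; and for claim 3, integrating primal feasibility (c) against an arbitrary $\mu$ for one side, and dual feasibility (n) together with \eqref{eq:cs2} against an arbitrary $\nu$ for the other, propagated through the same fixed resolvent (which is exactly where the single-controller assumption enters in the paper as well). Your explicit check that $\xi_{0}^{*}(s)\geq 1>0$ via constraint (p), so that normalizing $\xi^{*}$ into the probability measure $\mu^{*}$ is legitimate, is a small technical point the paper leaves implicit and is a worthwhile addition.
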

\begin{proof}
\begin{enumerate}
\item Follows from the strong duality of the primal-dual pair $(P \p)-(D
\p)$.
\item Using Lemma~\ref{lem:cs} equation~\eqref{eq:cs1} in normalized
form (i.e. dividing throughout by $\xi_{0}^{*}(s)$, which is the
zeroth order moment of the measure $\xi(s)$) we obtain
\begin{equation*}
\begin{array}{ll}
v^{*}(s)=\int_{A_{2}} \int_{A_{1}}
r(s,a_{1},a_{2})d\mu^{*}(s)d\nu^{*}(s) + \\
\text{\indent \indent \indent }\beta \sum_{s \p} v^{*}(s
\p)\int_{A_{1}} p(s \p; s, a_{1})d\mu^{*}(s) \text{ \ \ \ }\forall s
\in \mc{S}.
\end{array}
\end{equation*}
Upon simplification and vectorization of $v^{*}(s)$ one obtains
$$
\mathbf{v}^{*}=r(\mathbf{\mu^{*}},\nu^{*}) + \beta
P(\mu^{*})\mathbf{v}^{*}.
$$
Using a Bellman equation argument or by simply iterating this
equation (i.e. substituting repeatedly for $\mathbf{v}^{*}$) it is
easy to see that
$\mathbf{v}^{*}=\mathbf{v}_{\beta}(\mathbf{\mu^{*}},\mathbf{\nu^{*}})$.
\item Consider inequality (c) it at its optimal value.
We have for every state $s$:
$$
\begin{array}{c}
v^{*}(s) \geq \int_{a_{2} \in A_{2}}
r(s,a_{1},a_{2})d\nu^{*}(s)+\\
\text{ \ \ \ \ \ \ \ \ \ \ }
\beta\sum_{s^{\prime}=1}^{S}p(s^{\prime};s,a_{1})v^{*}(s^{\prime}).
\end{array}
$$
Integrating with respect to some arbitrary probability measure
$\mu(s)$ (with support on $A_{1}$), we get:
$$
\begin{array}{c}
v^{*}(s) \geq \int_{A_{2}} \int_{A_{1}} r(s,a_{1},a_{2})d\mu(s)
d\nu^{*}(s)+\\
\text{\ \ \ \ \ \ \ \ \ } \beta\sum_{s^{\prime}=1}^{S} \int_{A_{1}}
p(s^{\prime};s,a_{1})v^{*}(s^{\prime})d\mu(s).
\end{array}
$$
Thus,
\begin{equation*}
\begin{array}{l}
v^{*}(s) \geq r(s,\mu(s),\nu^{*}(s))+ \\
\text{\ \ \ \ \ \ \ \ \ }
 \beta\sum_{s^{\prime}=1}^{S} \int_{A_{1}}
p(s^{\prime};s,a_{1})v^{*}(s^{\prime})d\mu(s).
\end{array}
\end{equation*}
Iterating this equation, we obtain
$\mathbf{v}_{\beta}(\mathbf{\mu}^{*},
\mathbf{\nu}^{*})=\mathbf{v}^{*}\geq
\mathbf{v}_{\beta}(\mathbf{\mu},\mathbf{\nu^{*}})$ for every
strategy $\mathbf{\mu}$. This completes one side 
of the saddle point inequality.

Using the normalized version of equation (5), we get:
$$
\begin{array}{ll}
\frac{\alpha^{*}(s)}{\xi_{0}^{*}(s)} = & \int_{A_{2}} \int_{A_{1}}
r(s,a_{1}, a_{2}) d\mu^{*}(s)d\nu^{*}(s)
\\
& =r(s,\mu^{*}(s),\nu^{*}(s)).
\end{array}
$$
If we integrate inequality (n) in problem $(D \p)$ with respect to
any arbitrary probability measure $ \nu(s)$ with support on $A_{2}$
we obtain
$$
\frac{\alpha^{*}(s)}{\xi_{0}^{*}(s)} \leq r(s,\mu^{*}(s),\nu(s)).
$$
Thus $r(s,\mu^{*}(s),\nu^{*}(s))\leq r(s,\mu^{*}(s),\nu(s))$ for
every $s$. Multiplying throughout by $(I-\beta
P(\mathbf{\mu}^{*}))^{-1}$, we get
$\mathbf{v}_{\beta}(\mathbf{\mu}^{*},\mathbf{\nu}^{*}) \leq
\mathbf{v}_{\beta}(\mathbf{\mu}^{*},\mathbf{\nu}).$ This completes
the other side of the saddle point inequality.
\end{enumerate}
\end{proof}

\subsection{Obtaining the measures}
\label{sec:measures}

Solutions to the semidefinite programs $(SP)$ and $(SD)$ provide the
moment sequences corresponding to optimal strategies. Additional
computation is required to recover the actual measures. We briefly
describe a classical procedure to recover the measures using linear
algebra.
For more details, the reader may refer to \cite{Shohat}, \cite{Devroye}.

Let $\bar{\mu} \in \mathbb{R}^{2n}$ be a given moment sequence. We
wish to find a nonnegative measure $\mu$ supported on the real line
with these moments. The resulting measure will be composed of finitely
many atoms (i.e. a discrete measure) of the form $\sum w_{i}
\delta(x-a_{i})$ where
$$
\mathbf{Prob}(x=a_{i})=w_{i} \text{ \ \ } \forall i.
$$
Construct the following linear system: \\
\begin{equation*}
\left[ \begin{array}{cccc} \mu_{0} & \mu_{1} & \ldots & \mu_{n-1} \\
\mu_{1} & \mu_{2} & \ldots & \mu_{n} \\
\vdots & \vdots & \ddots & \vdots \\
\mu_{n-1} & \mu_{n} & \ldots & \mu_{2n-2} \end{array} \right] \left[
\begin{array}{c} c_{0} \\c_{1} \\ \vdots \\ c_{n-1} \end{array} \right]=
-\left[
\begin{array}{c} \mu_{n} \\\mu_{n+1} \\ \vdots \\ \mu_{2n-1} \end{array}
\right].
\end{equation*}

Note that the Hankel matrix that appears on the left hand side is a
sub-matrix of $\mc{H}(\bar{\mu})$. We assume without loss of
generality that the above matrix is strictly positive definite.
(Suppose the above matrix is not full rank, construct a smaller $k
\times k$ linear system of equations by eliminating the last $n-k$
rows and columns of the matrix so that the $k \times k$ submatrix is
full rank, and therefore strictly positive definite.) By inverting
this matrix we solve for $[c_{0}, \ldots, c_{n-1}]^{T}.$ Let $x_{i}$
be the roots of the polynomial equation
$$
x^{n}+c_{n-1}x^{n-1}+\cdots+c_{1}x+c_{0}=0.
$$
It can be shown that the $x_{i}$ are all real and distinct, and that
they are the support points of the discrete measure. Once the
supports are obtained, the weights $w_{i}$ may be obtained by
solving the nonsingular Vandermonde system given by:
\begin{equation*}
\sum_{i=1}^{n}w_{i}x_{i}^{j}=\mu_{j} \text{\ \ \ \ } (0 \leq j \leq
n-1).
\end{equation*}

\section{Example}
\label{sec:example}

\begin{figure}
\begin{center}
\includegraphics{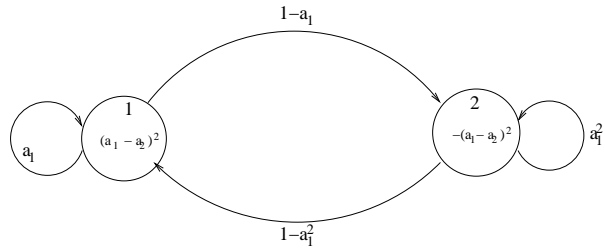}
\end{center}
\caption{A two state stochastic game with transition probabilities
dependent only on the action of Player 1. The payoffs associated to
the states are indicated in the corresponding nodes. The edges are
marked by the corresponding state transition probabilities. }
\label{fig:example}
\end{figure}

Consider the two player discounted stochastic game with $\beta=0.5$,
$\mathcal{S}=\{ 1,2 \}$ with payoff function
$r(1,a_{1},a_{2})=(a_{1}-a_{2})^{2}$ and
$r(2,a_{1},a_{2})=-(a_{1}-a_{2})^{2}$. Let the probability transition
matrix be given by: 
$$P(a_{1})=\left[ \begin{array}{cc} a_{1} &
1-a_{1} \\ 1-a_{1}^{2} & a_{1}^{2}
\end{array} \right].
$$ 
Figure~\ref{fig:example} graphically illustrates this stochastic
game, consisting of two states (the nodes) with polynomial transition
probabilities dependent on $a_{1}$ (as marked on the edges of the
graph). Within the nodes, the payoffs associated to the corresponding
states are indicated.

To understand this game, consider first the zero-sum (nonstochastic
game) with payoff function $p(a_1, a_2)=(a_1-a_2)^{2}$ over the
strategy space $[0,1]$. This game (called the ``guessing game'') was
studied by Parrilo in \cite{Par}. If Player $2$ is able to guess the
action of Player $1$, he can simply imitate his action (i.e. set
$a_2=a_1$ and his payoff to player $1$ would be zero (this is the
minimum possible since $(a_1-a_2)^{2} \geq 0$).  Player $1$ would try
to confuse player $2$ as much as possible and thus randomize between
the extreme actions $a_1=0$ and $a_1=1$ with a probability of
$\frac{1}{2}$. Player $2$'s best response would be to play
$a_2=\frac{1}{2}$ with probability $1$.

In the game described in Fig. 2, in State 1 Player $1$ plays the role
of confuser and Player $2$ plays the role of guesser. In state $2$,
the roles of the players are reversed, Player $1$ is the guesser and
Player $2$ the confuser. However, the problem is complicated a bit by
the fact that State 1 is advantageous to Player $1$ so that at every
stage he has incentive to play a strategy that gives him a good payoff
as well as maximize the chances of transitioning to State $1$.

The polynomial optimization problem that computes the minimax strategies
and the equilibrium values is the following:
\begin{equation*}
\begin{array}{l}
\text{minimize    } v(1)+v(2) \\ \\
 v(1) \geq \int (a_{1}-a_{2})^{2}d\nu(1)+\\
\text{\ \ \ \ \ \ \ \ \ \ } \beta(a_{1}v(1)+(1-a_{1})v(2)) \text{ \
\ } \forall a_{1} \in [0,1]\\ \\
 v(2) \geq -\int (a_{1}-a_{2})^{2}d\nu(2)+ \\
 \text{\ \ \ \ \ \ \ \ \ \ }\beta((1-a_{1}^{2})v(1)+a_{1}^{2}v(2))
\text{ \ \ } \forall a_{1} \in [0,1]\\ \\
 \nu(1), \nu(2) \text{ probability measures supported on } [0,1].
\\ \\
\end{array}
\end{equation*}

This problem can be reformulated as follows:
\begin{equation*}
\begin{array}{l}
\text{minimize    }  v(1)+v(2) \\ \\
 v(1) \geq a_{1}^{2}-2a_{1}\nu_{1}(1)+\nu_{2}(1)+\\
 \text{ \ \ \ \ \ \ \ }\beta(a_{1}v(1)+(1-a_{1})v(2)) \text{ \ \ }\forall a_{1} \in
 [0,1]\\ \\
 v(2) \geq -a_{1}^{2}+2a_{1}\nu_{1}(2)-\nu_{2}(2)+ \\
\text{ \ \ \ \ \ \ \ }\beta((1-a_{1}^{2})v(1)+a_{1}^{2}v(2)) \text{ \ \ } \forall a_{1} \in [0,1] \\ \\
\left[ 1, \nu_{1}(1), \nu_{2}(1) \right]^{T}, [1, \nu_{1}(2),
\nu_{2}(2)]^{T} \in \mc{M}([0,1]).
\end{array}
\end{equation*}
Solving the SDP and its dual we obtain the following optimal
cost-to-go and optimal moment sequences:
$$ \begin{array}{c} \mathbf{v}^{*}=[ .298, -.158 ]^{T} \end{array}$$
$$
\begin{array}{cl}
\mathbf{\bar{\mu}}^{*}(1)=[1,.614,
.614]^{T} & \mathbf{\bar{\mu}}^{*}(2)=[1,.5,.25]^{T} \\
\mathbf{\bar{\nu}}^{*}(1)=[ 1, .614, .377]^{T} &
\mathbf{\bar{\nu}}^{*}(2)=[1, .614,.614]^{T}.
\end{array}
$$ 
The corresponding measures obtained as explained in
subsection~\ref{sec:measures} are supported at only finitely many
points, and are given by the following:
\begin{align*}
\mu^{*}(1)&=.386\text{ }\delta(a_{1})+.614\text{ }\delta(a_{1}-1) \\
\mu^{*}(2)&=\delta(a_{1}-.5) \\
\nu^{*}(1)&=\delta(a_{2}-.614) \\
\nu^{*}(2)&=.386\text{ }\delta(a_{2})+.614\text{ }\delta(a_{2}-1).
\end{align*}
Consider, for example, play in State $1$. If Player $1$ were playing
obliviously with respect to the state transitions, he would play
actions $a_1=0$ and $a_1=1$ with one half probability each. However,
to increase the probability of staying in State $1$ he plays action
$1$ with a higher probability. Player $2$ cannot affect the state
transition probabilities directly, thus he must play a myopic best
response. (A myopic best response is one that is a best response for
the game in the current state). Note that in state $1$, once Player
$1$'s strategy is fixed, the (only) best response for Player $2$ is to
play the action $a_2=0.614$ with probability $1$. In state $2$, player
$1$'s best strategy is to play $a_1=0.5$. Player $2$ picks an action
from his myopic best response set (in this case, all probability
distributions that are supported on the points $0$ and $1$).

\section{Conclusions and future work}
\label{sec:conclusions}

In this paper, we have presented a technique for solving two-player,
zero-sum finite state stochastic games with infinite strategies and
polynomial payoffs. We established the existence of equilibria for
such games. As a by-product we got an algorithm that converged to
unique value vector of the game (however this algorithm does not seem
to have very attractive convergence rates). We focused mainly on the
case where the single-controller assumption holds. We showed that the
problem can be reduced to solving a system of univariate polynomial
inequalities and moment constraints. We used techniques from the
classical theory of moments and sum-of-squares to reduce the problem
to a semidefinite programming problem. By solving a primal-dual pair
of semidefinite programs, we obtained minimax equilibria and optimal
strategies for the players.

It is known that finite-state, finite action, two-player zero-sum
games which satisfy the \emph{orderfield} property \cite{Raghavan},
\cite{Partha} may be solved via linear programming.  The
single-controller case, games with perfect information, switching
controller stochastic games, separable reward-state independent
transition (SER-SIT) games and additive games all satisfy this
property. We intend to extend these cases to the infinite strategy
case with polynomial payoffs. General finite action stochastic games
which do not satisfy the orderfield property still have an interesting
mathematical structure, but efficient computational procedures are not
available. Developing such procedures present an interesting direction
of future research.

\\


\vspace{1cm}
\noindent\textbf{Acknowledgement:} The authors would like to thank Ilan
Lobel and Prof. Munther Dahleh for bringing to their attention the
linear programming solution to single controller finite stochastic
games.
\\ \\

\bibliographystyle{IEEEtran}

\bibliography{stochgames}

\begin{thebibliography}{10}
\providecommand{\url}[1]{#1}
\csname url@samestyle\endcsname
\providecommand{\newblock}{\relax}
\providecommand{\bibinfo}[2]{#2}
\providecommand{\BIBentrySTDinterwordspacing}{\spaceskip=0pt\relax}
\providecommand{\BIBentryALTinterwordstretchfactor}{4}
\providecommand{\BIBentryALTinterwordspacing}{\spaceskip=\fontdimen2\font plus
\BIBentryALTinterwordstretchfactor\fontdimen3\font minus
  \fontdimen4\font\relax}
\providecommand{\BIBforeignlanguage}[2]{{%
\expandafter\ifx\csname l@#1\endcsname\relax
\typeout{** WARNING: IEEEtran.bst: No hyphenation pattern has been}%
\typeout{** loaded for the language `#1'. Using the pattern for}%
\typeout{** the default language instead.}%
\else
\language=\csname l@#1\endcsname
\fi
#2}}
\providecommand{\BIBdecl}{\relax}
\BIBdecl

\bibitem{Bertsekas}
D.~P. Bertsekas, \emph{Dynamic programming and optimal control}.\hskip 1em plus
  0.5em minus 0.4em\relax Athena Scientific, 2005, vol.~I.

\bibitem{Fudenberg}
D.~Fudenberg and J.~Tirole, \emph{Game theory}.\hskip 1em plus 0.5em minus
  0.4em\relax Cambridge, MA: MIT Press, 1991.

\bibitem{Koos}
J.~A. Filar and K.~Vrieze, \emph{Competitive {M}arkov decision
  processes}.\hskip 1em plus 0.5em minus 0.4em\relax New York: Springer, 1997.

\bibitem{Shapley}
L.~S. Shapley, ``Stochastic games,'' \emph{Proc. Nat. Acad. Sci. U. S. A.},
  vol.~39, pp. 1095--1100, 1953.

\bibitem{Partha}
T.~Parthasarathy and T.~E.~S. Raghavan, ``An orderfield property for stochastic
  games when one player controls transition probabilities,'' \emph{J. Optim.
  Theory Appl.}, vol.~33, no.~3, pp. 375--392, 1981.

\bibitem{Par}
P.~A. Parrilo, ``Polynomial games and sum of squares optimization,'' in
  \emph{Proceedings of the 45$^{th}$ IEEE Conference on Decision and Control},
  2006.

\bibitem{Stein}
N.~Stein, A.~Ozdaglar, and P.~A. Parrilo, ``Separable and low-rank continuous
  games,'' in \emph{Proceedings of the 45$^{th}$ IEEE Conference on Decision
  and Control}, 2006.

\bibitem{DKS}
M.~Dresher, S.~Karlin, and L.~S. Shapley, ``Polynomial games,'' in
  \emph{Contributions to the Theory of Games}, ser. Annals of Mathematics
  Studies, no. 24.\hskip 1em plus 0.5em minus 0.4em\relax Princeton, N. J.:
  Princeton University Press, 1950, pp. 161--180.

\bibitem{Par2}
P.~A. Parrilo, ``Structured semidefinite programs and semialgebraic geometry
  methods in robustness and optimization,'' Ph.D. dissertation, California
  Institute of Technology, May 2000.

\bibitem{Karlin}
S.~Karlin and L.~Shapley, \emph{Geometry of moment spaces}, ser. Memoirs of the
  American Mathematical Society.\hskip 1em plus 0.5em minus 0.4em\relax AMS,
  1953, vol.~12.

\bibitem{Shohat}
J.~A. Shohat and J.~D. Tamarkin, \emph{The {P}roblem of {M}oments}, ser.
  American Mathematical Society Mathematical surveys, vol. II.\hskip 1em plus
  0.5em minus 0.4em\relax New York: American Mathematical Society, 1943.

\bibitem{Devroye}
L.~Devroye, \emph{Nonuniform random variate generation}.\hskip 1em plus 0.5em
  minus 0.4em\relax New York: Springer-Verlag, 1986.

\bibitem{Raghavan}
T.~E.~S. Raghavan and J.~A. Filar, ``Algorithms for stochastic games---a
  survey,'' \emph{Z. Oper. Res.}, vol.~35, no.~6, pp. 437--472, 1991.

\end{thebibliography}

\end{document}